\newtheorem{theorem}{Theorem}
\newtheorem*{cheeger}{Cheeger Inequality}
\newtheorem{cor}[theorem]{Corollary}
\newtheorem{lemma}[theorem]{Lemma}
\newtheorem{obs}[theorem]{Observation}
\theoremstyle{remark}
\newcommand{\one}{\ensuremath{\mathbbm{1}}}
\DeclareMathOperator{\BIN}{Bin}
\newcommand{\bin}[1]{\ensuremath{\BIN\paren{#1}}}
\newcommand{\e}{{\mathbb{E}}}
\newcommand{\p}{{\mathbb{P}}}
\DeclareMathOperator{\Vol}{Vol}
\newcommand{\vol}[1]{\ensuremath{\Vol\!\paren{#1}}}
\newcommand{\eps}{\epsilon}
\newcommand{\ba}{\backslash}
\newcommand{\prob}[1]{\ensuremath{\p\!\left( #1\right)}}
\newcommand{\expect}[1]{\ensuremath{\e\!\left[ #1 \right]}}
\newcommand{\norm}[2][]{\ensuremath{\left\| #2 \right\|_{#1}}}
\newcommand{\R}{\ensuremath{\mathbb{R}}}
\newcommand{\paren}[1]{\ensuremath{\left( #1 \right)}}
\newcommand{\set}[1]{\ensuremath{\left\{ #1 \right\}}}
\newcommand{\size}[1]{\ensuremath{\left| #1 \right|}}
\newcommand{\abs}[1]{\ensuremath{\left| #1 \right|}}
\newcommand{\ceil}[1]{\ensuremath{\left\lceil #1 \right\rceil}}
\newcommand{\half}{\ensuremath{\nicefrac{1}{2}}}
\renewcommand{\L}{\ensuremath{\mathcal{L}}}
\newcommand{\Laplace}[1]{\ensuremath{\L\paren{#1}}}
\newcommand{\bigTheta}[1]{\ensuremath{\Theta\!\paren{#1}}}
\newcommand{\skg}{$t^{\textrm{th}}$-order stochastic Kronecker graph}
\newcommand{\lilOh}[1]{\ensuremath{\mathit{o}\!\paren{#1}}}
\newcommand{\bigOh}[1]{\ensuremath{\mathcal{O}\!\left( #1 \right)}}
\newcommand{\bigOmega}[1]{\ensuremath{\Omega\!\left( #1\right)}}
\newcommand{\inner}[2]{\ensuremath{\left\langle #1, #2 \right\rangle}}
\newcommand{\ER}{Erd\H{o}s-R\'enyi}
\begin{document}

\begin{frontmatter}

\title{Connectivity and Giant Component of Stochastic Kronecker Graphs}
\runtitle{Connectivity and Giant Component of SKGs}

\begin{aug}
\author{\fnms{Mary}
  \snm{Radcliffe}\corref{}\ead[label=e1]{radcliffe@math.washington.edu}\ead[label=u1,url]{www.math.washington.edu/$\sim$maryr25}}
\address{University of Washington -- Seattle \\ \printead{e1} \\ \printead{u1}}
\and
\author{\fnms{Stephen~J.} \snm{Young}\ead[label=e2]{stephen.young@louisville.edu}\ead[label=u2,url]{www.math.louisville.edu/$\sim$syoung}}
\address{University of Louisville \\ \printead{e2} \\ \printead{u2}} 

\runauthor{M.~Radcliffe \&  S.~J.~Young}
\affiliation{University of Washington and University of Louisville}
\end{aug}

\begin{abstract}
Stochastic Kronecker graphs are a model for complex networks
where each edge is present independently according to the Kronecker
(tensor) product of a fixed matrix $P \in [0,1]^{k \times k}$.  We
develop a novel correspondence between the adjacencies in a general
stochastic Kronecker graph and the action of a fixed Markov chain.
Using this correspondence we are able to generalize the arguments of
Horn and Radcliffe on the emergence of the giant component from the
case where $k = 2$ to arbitrary $k$.  We are also able to use this
correspondence to completely analyze the connectivity of a general
stochastic Kronecker graph.
\end{abstract}



\end{frontmatter}

\section{Introduction}
In many ways the study of random graphs traces its history back to the
seminal work of Erd\H{o}s and R\'{e}nyi showing that there exists a
rapid transition between the regimes of a random graph consisting of many small
components, a random graph having one ``giant'' component, and a
random graph being connected~\cite{Erdos:RandomGraphs}.  Because of
their central role in the history of random graphs these phase
transitions have been extensively studied, see for instance
\cite{Bollobas:DiameterER,Bollobas:evolutionER,Bollobas:NormalGC,Bollobas:BranchingGC,Lubetzky:YoungGC,Spencer:AchlioptasGC,Luczak:CriticalComponents},
among numerous others.  We contribute to this ongoing discussion by
providing a sharp transition for the emergence of both the giant
component and connectivity for the stochastic Kronecker graph,
a generalization of the standard Erd\H{o}s-R\'{e}nyi binomial random
graph model, $\mathcal{G}(n,p)$.

More formally, recall that the Kronecker or tensor product of two matrices $A \in
\R^{m \times n}$ and $B \in \R^{p \times q}$ is a matrix $A \otimes B =
C \in \R^{mp \times nq}$.  For $i \in [m], j \in [n], s \in [p],$ and
$t \in [q]$ the entry  $C_{(i-1)m + s, (j-1)n + t}$ is $A_{ij}B_{st}$,
that is 
\[ A \otimes B =  C = \left[\begin{matrix} A_{1,1}B & A_{1,2}B & \cdots
    & A_{1,n}B \\ A_{2,1}B & A_{2,2}B & \cdots & A_{2,n}B \\ \hdots &
    \hdots & \ddots & \hdots \\ A_{m,1}B & A_{m,2}B & \cdots &
    A_{m,n}B \end{matrix}\right].\]

Letting $P \in [0,1]^{k \times k}$ be a symmetric matrix, the \skg\
generated by $P$ is formed by taking the $t$-fold Kronecker product of
$P$, denoted $P^{\otimes t}$, and using this as the probability matrix
for a graph with independent edges.  That is, each edge $\set{i,j}$ is
present independently with probability $P^{\otimes t}_{ij} = P^{\otimes t}_{ji}$.

The stochastic Kronecker graph was originally proposed as a model for
the network structure of the internet with the property that it could
be easily fit to real world data, especially in the case where the generating matrix
was $\left[ \begin{matrix} \alpha & \beta \\ \beta &
    \gamma \end{matrix}\right]$ where $0 < \gamma \leq \beta \leq
\alpha < 1$~\cite{Leskovec:KroneckerGeneration}.  As such, there have been
several papers analyzing structural properties of the stochastic
Kronecker graph when the generating matrix is a $2 \times 2$
matrix~\cite{Leskovec:KroneckerGeneration,Leskovec:Kronecker,Mahdian:Kronecker,
  Radcliffe:KroneckerGiant}.  Most relevant to this
current work are the results of Mahdian and Xu~\cite{Mahdian:Kronecker} who
anaylzed the connectivity, diameter, and the emergence of the giant
component with $0 < \gamma \leq \beta \leq \alpha < 1$, and the work of the first author and Horn
who analyzed the emergence and size of the giant component for arbitrary
$\alpha,\beta,\gamma \in (0,1)$~\cite{Radcliffe:KroneckerGiant}. 
In this work
we consider the case of an arbitrarily sized generating matrix, and develop necessary and sufficient conditions for the emergence of the giant component and connectivity. The key tool to analyzing these graphs is to tie the structure of the graph to a fixed Markov chain on the underlying generating matrix. Using this underlying structure, one can analyze the graph structure more completely than with traditional tools.

Given a $t^{\textrm{th}}$-order stochastic Kronecker graph with
generating matrix $P$, we define $W(P)$ to be the weighted graph on
$[k]$, where weights are as given in $P$.  We will occasionally refer
to $W$ as the underlying graph of $G$.  We also define the backbone graph
of the matrix $P$, $B(P)$, as the subgraph of $W(P)$ consisting of the
edges assigned weight 1.  That is, $B(P)$ is a graph on the vertices
$[k]$ where $\set{i,j}$ is an edge if and only if $P_{ij} = P_{ji} =
1$.  When the matrix $P$ is clear, we will neglect the dependence on
$P$ and write simply $W$ and $B$.

Our primary results can be summarized as follows.
\begin{theorem}\label{T:Master}
Let $G$ be \skg\ generated by a symmetric matrix $P \in [0,1]^{k \times k}$ which has column sums $c_1\leq c_2\leq\dots\leq
c_k$.  Let $n = k^t$ be the number of vertices of $G$.  
\begin{enumerate}
\item If $W$ is disconnected or bipartite, then the largest component of $G$ has
  size $\bigOh{(k-1)^t} \in \lilOh{n}.$ \label{M:disc_bip}
\item If $W$ is connected and non-bipartite and $\prod_i c_i < 1$,
  then there is some $0 < \alpha < 1$ such that with probability at
  least $1 - e^{-\bigTheta{n^{\alpha}}}$ there are at least $n -
  \bigOh{n^{\alpha}}$ isolated vertices in $G$. \label{M:small}
\item If $W$ is connected, non-bipartite, $\prod_i c_i = 1$, and the
  $c_i$'s are not identically one, then there is a positive constant $\alpha$
  such that with probability at least $1 - e^{-\bigTheta{n^{\alpha}}}$,
  the largest component of $G$ has size $\bigTheta{n}$, that is, $G$
  has a giant component. \label{M:giant_bound}
\item If $W$ is connected,  non-bipartite, and $\prod_i c_i > 1$, then there is a positive constant $\alpha$ such that with
  probability at least $1 - e^{-\bigTheta{n^{\alpha}}}$ 
   the largest component of $G$ has size
  $\bigTheta{n}$. \label{M:giant}  
\item If $W$ is connected, non-bipartite, and $c_1 < 1$, then there is
  a positive constant $\alpha$ such that $G$ has at least
  $\ln(n)^{(1-\lilOh{1})\ln\ln(n)}$ isolated vertices with probability
  at least $1 - \bigOh{n^{-\alpha}}$. \label{M:discon_c1}
\item If $W$ is connected, non-bipartite, $c_1 = 1$, and $B$ has a
  vertex of degree zero, then there is some positive constant $\alpha$
  such that $G$ has at least $\ln(n)^{(1-\lilOh{1})\ln\ln\ln(n)}$ isolated vertices with probability at least $1 - \bigOh{n^{-\alpha}}$.\label{M:discon}
\item If $W$ is connected and non-bipartite, $c_1 = 1$, and $B$ has no
  vertices of degree zero, then there is a constant $\alpha > 0$ such
  that $G$ is connected with probability at least  $1 - e^{-(1-\lilOh{1})n^{\alpha}} $. \label{M:connect_bound}
\item If $W$ is connected and non-bipartite and $c_1 > 1$, then there
  is a constant $\alpha > 0$ such that $G$ is
  connected with probability at least $1-e^{-(1-\lilOh{1})n^{\alpha}}$.   \label{M:connected}
\end{enumerate}
\end{theorem}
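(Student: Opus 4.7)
The central tool will be a correspondence between $G$ and a Markov chain on $[k]$ with column-stochastic transition matrix $M$ defined by $M_{ij} = P_{ij}/c_j$. The expected degree of any vertex $v = (v_1, \ldots, v_t) \in [k]^t$ factors as $\prod_i c_{v_i}$, and a weighted-random neighbor of $v$ is generated by replacing each coordinate $v_i$ independently via a single step of $M$. Crucially, $W$ is connected iff $M$ is irreducible, and $W$ is non-bipartite iff $M$ is aperiodic, so under the hypotheses of Parts 2--8 the chain is ergodic. The "type" of $v$ -- the empirical distribution of its coordinates -- governs its local behavior through large-deviation estimates on the type class, and this will drive nearly every estimate.

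For the giant-component side (Parts 1--4), Part 1 is structural: if $W$ has components of sizes $k_1, \ldots, k_r$ (each $\leq k-1$), then $u \sim v$ in $G$ forces $u_i, v_i$ to lie in the same component of $W$ for every $i$, so every connected set of $G$ is contained in a single "coordinate-wise component pattern" of size at most $(k-1)^t$; the bipartite case is analogous, with each component spanning at most two complementary parity signatures. For Parts 2--4, note that the log expected degree of a type-$\tau$ vertex is $t \sum_i \tau_i \ln c_i$, and since types concentrate near uniform, the typical expected degree is approximately $\paren{\prod c_i}^{t/k}$. When $\prod c_i < 1$ (Part 2), Chernoff bounds on types show that all but $\bigOh{n^\alpha}$ vertices have vanishing expected degree, hence are isolated except with exponentially small failure. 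When $\prod c_i > 1$ (Part 4), a second-moment / branching argument on paths seeded at a vertex of favorable type -- using mixing of $M$ to control path-count variance -- produces heavy clusters that merge by sprinkling. Part 3 ($\prod c_i = 1$ with $c_i$'s not identically 1) is the boundary case: AM-GM forces some $c_i > 1$, and one builds the giant from vertices of type concentrated on those indices, where expected degrees genuinely exceed 1.

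For the connectivity side (Parts 5--8), the key object is the vertex $(1,\ldots,1)$ and its Hamming neighborhood. In Part 5 ($c_1 < 1$), this vertex has expected degree $c_1^t = \lilOh{1}$; vertices within Hamming distance roughly $\ln n/\ln\ln n$ inherit essentially the same vanishing degree, each isolated w.h.p., and one counts $\binom{t}{\ln n/\ln\ln n}$-many of them to recover the $\ln(n)^{(1-\lilOh{1})\ln\ln n}$ bound. In Part 6 ($c_1 = 1$, $B$ has an isolated vertex $j$), vertices whose type concentrates at $j$ have expected degree roughly $1$ and no forced backbone edges, so a Poisson-style second moment lower bound yields $\ln(n)^{(1-\lilOh{1})\ln\ln\ln n}$ isolated vertices. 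Part 8 ($c_1 > 1$) combines: every vertex has expected degree $\geq c_1^t \to \infty$ (no isolated vertices w.h.p.); Part 4 supplies a giant component; and the chain $M$, being ergodic, mixes in $\bigOh{\log n}$ steps, so any vertex is w.h.p. adjacent via sprinkled random edges to the giant. Part 7 ($c_1 = 1$ and $\delta(B)\geq 1$) is the borderline counterpart: $c_1 = 1$ with $P_{i1} \in [0,1]$ forces every edge incident to coordinate value $1$ of positive probability to have probability $1$ (by the column-sum constraint combined with the integer structure needed to produce $\delta(B)\geq 1$), giving deterministic backbone-neighbors that iterate into backbone cycles; these cycles then connect via random edges from the coordinates with $c_i > 1$.

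The main technical obstacles are the two threshold cases, Parts 3 and 7, where the natural expected-degree margin is zero. In Part 3, one must carefully select the "core" type class (maximizing $\sum \tau_i \ln c_i$ subject to the product-one constraint) and then argue that this core absorbs vertices of less favorable types through transitions of $M$ without losing giant-component volume to $\lilOh{n}$. In Part 7, the random-edge contribution from coordinates with $c_i < 1$ is marginal, and one must show that the deterministic backbone structure provides exactly the scaffolding needed for the sparse random edges to complete connectivity; this will require a delicate combined analysis of backbone walks and sprinkling that exploits the rigidity that $c_1 = 1$ combined with $\delta(B)\geq 1$ imposes on $P$.
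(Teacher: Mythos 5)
Your overall framework (words/types, the chain $M=C^{-1}P$, log-expected-degree $t\inner{\sigma}{L}$) matches the paper's, and Parts 1, 2 and 5 are essentially right (for Part 2 note that a per-vertex "isolated w.h.p." statement is not enough for the $1-e^{-\bigTheta{n^{\alpha}}}$ bound; you must bound the \emph{total} expected degree of the low-degree type classes and apply Chernoff to the edge count, as a union bound over $n-\bigOh{n^\alpha}$ vertices with failure probability $e^{-\alpha t}$ each is useless). The serious gap is Part 3. Your plan to "build the giant from vertices of type concentrated on the indices with $c_i>1$" cannot work: when $\prod_i c_i=1$ with some $c_i<1$, the vertices whose type puts noticeably more than uniform weight on the large-$c_i$ indices number only $n^{1-\delta}=\lilOh{n}$, so no $\bigTheta{n}$ component can be assembled from them; the entire difficulty is attaching a constant fraction of the \emph{near-uniform} vertices, whose expected degree is only $\approx 1$. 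You name this "absorption" as an obstacle but supply no mechanism. The paper's mechanism is a monotonicity lemma (Lemma \ref{L:uniform}): $\inner{\one M^{s}}{L}>\inner{\one}{L}=0$ strictly for all $s\ge 1$, so a typical vertex that has at least one neighbor (a constant fraction do, giving the $\bigTheta{n}$ count) lands after one step on a signature with expected degree $e^{\Theta(t)}$, after which a path guided by $M$ reaches a spectrally certified connected core $\mathcal{S}_\epsilon$ near stationarity. Without something playing this role, Part 3 fails, and the unspecified "second-moment plus sprinkling" variance control in Part 4 is also far from routine in this inhomogeneous setting.

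Part 6 also fails as stated: you claim vertices whose type concentrates at the $B$-isolated index $j$ have expected degree roughly $1$, but only $c_1=1$ is guaranteed and $c_j$ may exceed $1$ (e.g.\ $k=3$, $P_{12}=1$, $P_{23}=\frac12$, $P_{33}=\frac{9}{10}$ gives $c_3=1.4$ with vertex $3$ isolated in $B$); such vertices have expected degree $c_j^{(1-\lilOh{1})t}\to\infty$ and are essentially never isolated. The working construction takes vertices with all but $t_j=\Theta(\ln\ln t)$ coordinates equal to $1$ and the rest equal to $j$: $c_1=1$ keeps the expected degree polylogarithmic, the absence of weight-$1$ entries in column $j$ caps every incident edge probability so that $\prob{\deg(v)=0}\ge e^{-2\expect{\deg(v)}}$, and a case split on whether vertex $1$ has a backbone edge yields independence of (or control over) edges inside the family. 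Finally, for Parts 7--8, "no isolated vertices plus a giant component" does not imply connectivity, and "w.h.p.\ adjacent to the giant" per vertex cannot survive a union bound over $n$ vertices; you need, for \emph{every} vertex, a bounded-length path to the connected core with failure probability $e^{-n^{\Omega(1)}}$. In Part 7 this is exactly the dichotomy the paper exploits (if few coordinates have $c_i>1$, the deterministic backbone step tracks $M$ exactly, since a column with $c_i=1$ and a backbone edge is a $0$--$1$ vector; otherwise the expected degree is $e^{\Omega(t)}$ and Chernoff provides a random step tracking $M$), and one must first note that non-bipartiteness forces $c_k>1$ so that the core exists at all; your "backbone cycles connect via random edges" and the $\bigOh{\log n}$ mixing claim (the $k$-state chain mixes in $\bigOh{1}$ steps) do not substitute for this argument.
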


We note that item (\ref{M:connected}) above is typical for the emergence
of connectivity; that is, the graph is connected asymptotically almost
surely precisely when asymptotically almost surely the minimum
degree is at least 1.  In fact, taking (\ref{M:discon_c1}), (\ref{M:discon}), (\ref{M:connect_bound}),
and (\ref{M:connected}) together we can see that a stochastic Kronecker
graph is connected precisely when the minimum degree is at least 1
asymptotically almost surely.  From this viewpoint, the slightly unnatural seeming
condition on the backbone graph $B$ is simply the condition needed to
assure that $G$ has no isolated vertices.   

The folklore in the study of random graphs asserts that, in general,
the giant component should emerge when the average expected degree is 1,
see for instance \cite{Bollobas:evolutionER,Chung:ComponentsExpectDeg,Erdos:RandomGraphs,Martin:GiantCompSubgraph}.  As the average expected degree in a \skg\
is $k^{-t}\paren{c_1 + \cdots + c_k}^t$, this suggests that the
transition occurs when $\frac{1}{k} \paren{c_1 + \cdots + c_k} > 1$.
However, as parts (\ref{M:small}) and (\ref{M:giant}) of Theorem
\ref{T:Master} show, the transition actually occurs when
$\paren{\prod_i c_i}^{\frac{1}{k}} > 1$.  Noting that the expected
degrees in stochastic Kronecker graphs follow a multinomial
distribution (see Section \ref{defsec}), this condition can be seen as
equivalent (asymptotically) to the condition that \emph{median}
expected degree is at least one.  Thus our results may suggest that
the average expected degree is not as deeply connected to giant
component as previously thought, because in many of the standard
random graph models, such as the \ER\ random graph, the average and the
median expected degree agree. That is, it may be that the median is truly the
determining factor for such structures.  It is also worth noting that
Spencer has conjectured based in part on \cite{Horn:GiantCompSubgraph,Horn:PercolationGC}, that the correct intuition is
that the emergence of the giant component is tied to the second order
average degree~\cite{Spencer:GCOverview}. 

To prove Theorem \ref{T:Master}, we will develop several general
results on $G$, and then apply these results to the specific situations above. In
particular, we are able to tie the adjacency structure of $G$ to a finite state
Markov chain on $W$. Using this association, we can take advantage of
the finite structure of $W$ to build theory regarding the asymptotically growing
structure $G$. 

\section{Definitions and Tools}\label{defsec}

Given a stochastic Kronecker graph $G$ generated by $P$, let $A$ be
the adjacency matrix of $G$ and $D$ the diagonal matrix of degrees in
$G$. Let $c_1\leq c_2\leq \dots\leq c_k$ be the column sums of $P$ (note that we can assume these are nondecreasing without loss of generality), and let $C$ be the diagonal matrix of column sums in $P$. 

 We note that there are multiple means of describing
the entries of
the probability matrix $P^{\otimes t}$ to take advantage of the
Kronecker product structure.  One point of view that is particularly
helpful is to define  a
bijection $w \colon V(G)\to [k]^t$, so that each vertex of $G$ is represented by a
word of length $t$ in $[k]$. We will often identify the vertex to its
corresponding word, and write $v=(v_1, v_2, \dots, v_t)$. Given an
appropriate choice of bijection,  for any two vertices $u$ and $v$,
the probability that $u$ and $v$ are adjacent is \[ p_{uv} =
\prod_{i=1}^t P_{u_iv_i}.\] That is to say, we take the product of
entries of the generating matrix $P$, where entries correspond to the
pairs of components in the words representing $u$ and $v$. We will sometimes use the
notation $P^{\otimes t}_{u, v}$ to refer to the $w(u), w(v)$ position
in $P^{\otimes t}$, where we index the matrix by the ordered words
obtained via the Kronecker product, and we note that $p_{uv}=P^{\otimes t}_{u, v}$.  

We shall use the notation $u\sim v$ to indicate that $u$ is adjacent to $v$. When ambiguous, we write $u\sim_Gv$ to indicate that $u$ is adjacent to $v$ in the graph $G$.

Now, suppose that $w(v)$ has $a_1$ coordinates equal to 1, $a_2$ coordinates equal to 2, and so on. It is straightforward to calculate that
\[\expect{\deg(v)} = c_1^{a_1}c_2^{a_2}\dots c_k^{a_k}.\]
From this we can see that the stochastic Kronecker graph is defined precisely so that the
expected adjacency matrix $\bar{A}=P^{\otimes t}$, and the expected
degree matrix $\bar{D}=C^{\otimes t}$. At times we will wish to
emphasize the graph structure of $P^{\otimes t}$, and thus will use $W^{\otimes t}$ to refer to the weighted complete graph with weights given by $P^{\otimes t}$.  

Moreover, it will frequently be of interest to know the number of
coordinates in $w(v)$ equal to each symbol in $[k]$. To that end, we
define the {\it signature} of $v$ to be $\sigma(v)=(\sigma_1,
\sigma_2, \cdots, \sigma_k)$, where $\sigma_i$ is the proportion of
symbols in $w(v)$ equal to $i$. For example, if $k=5$ and
$w(v)=121251$, we would have $\sigma(v)=(\frac{1}{2}, \frac{1}{3}, 0,
0, \frac{1}{6})$. We will denote  by
$\mathcal{S} = \set{\paren{\sigma_1, \ldots,\sigma_k} \mid \sigma_i
  \geq 0, \sum_i \sigma_i = 1}$ the space of possible signatures. 
 Often we will establish an underlying signature for a vertex and then take $t$ to infinity; this will generally result in noninteger values for the number of letters of a particular value in $w(v)$. This can be overlooked, however, as rounding to the next integer appropriately will not change the asymptotic features of the vertices, and so we will often assume that a vertex can take any signature. 

Let $L = \paren{\ln(c_1), \ln(c_2), \cdots, \ln(c_k)}$. We will make frequent use of the simple observation that 
\[\ln\paren{\expect{\deg(v)}} = t \inner{\sigma(v)}{ L},\]
where $\inner{\cdot}{\cdot}$ represents the standard dot product.

\subsection{Markov chains in $G$ and $W$}\label{MCdescription}

Let $W^{\otimes t}$ be the weighted complete graph on $V(G)$, with the
weight of edge $uv$ equal to $P^{\otimes t}_{u,v}$. Let $v$ be a
vertex in $W^{\otimes t}$ with signature $\sigma=(\sigma_1, \sigma_2,
\dots, \sigma_k)$. Define $Z^{(v)}$ to be a random variable that takes
values in $\mathcal{S}$, where $Z^{(v)}$ is the signature of a
randomly chosen neighbor of $v$ according to the probability
distribution defined by the weights of the edges. That
is, \[\p(Z^{(v)}=\tau)=\sum_{\sigma(u) = \tau}\frac{P^{\otimes t}_{u, v}}{\deg_{W^{\otimes t}}(v)}.\]
That is to say, $Z^{(v)}$ is the signature of the vertex obtained
after taking one step in the uniform random walk on $W^{\otimes t}$.

For each $i\in [k]$, let $X^{(i)}$ be the random variable that takes values in $[k]$, with $\p(X^{(i)}=j)=\frac{P_{ij}}{c_i}$. Note that for $v=(v_1, v_2, \dots, v_t)$ fixed, we have 
\[
\p(X^{(v_1)}\times X^{(v_2)}\times \dots \times X^{(v_t)} = (u_1, u_2, \dots, u_t)) = \prod_{i=1}^t \frac{P_{v_iu_i}}{c_{v_i}}
= \frac{P^{\otimes t}_{u, v}}{\deg_{W^{\otimes t}}(v)} .
\]
Thus we can consider $Z^{(v)}$ as giving the signature of a randomly
chosen neighbor of $v$, chosen according to the product distribution
$X^{(v_1)}\times X^{(v_2)}\times \dots \times X^{(v_t)}$. As the
signature is independent of order, for the purposes of analyzing
$Z^{(v)}$, we may write this distribution as
$(X^{(1)})^{\sigma_1t}\times(X^{(2)})^{\sigma_2t}\times\dots\times(X^{(k)})^{\sigma_kt}$. Therefore,
for all $i\in [k]$, letting $Z_i^{(v)}$ be the $i^{\textrm{th}}$
component of the signature $Z^{(v)}$, we have
\[
\expect{Z^{(v)}_i}  = \frac{1}{t}\sum_{j=1}^k (\sigma_jt) \p(X^{(j)}=i)= \sum_{j=1}^k \sigma_j \frac{P_{ij}}{c_j}.
\]

On the other hand, let $M=C^{-1}P$, the transition probability matrix
for the uniform random walk on  $W$ and notice that the matrix product $\sigma M$ has $i^{\textrm{th}}$ coordinate
\[
\paren{(\sigma_1, \sigma_2, \dots, \sigma_k)M}_i = \sum_{j=1}^k \sigma_j M_{ij}
= \sum_{j=1}^k \sigma_j \frac{P_{ij}}{c_j}
= \expect{Z^{(v)}_i}
\] Thus, $\sigma M = \expect{Z^{(v)}}$.

Therefore, we can think of the distribution of a random walk on $W$ as
the expected signature of a vertex in a random walk on $W^{\otimes
  t}$. Let $\pi=(\pi_1, \pi_2, \dots, \pi_k)$ be the stationary
distribution of the random walk on $W$, so $\pi M= \pi$. It is a
simple exercise to verify that $\pi_i = \frac{c_i}{\sum_j c_j}$. We
will show in Section \ref{Structural} that the collection of
signatures close to $\pi$ will in fact, asymptotically almost surely,
form a connected subgraph in $G$, and further, by leveraging the convergence
of the Markov chain on $W$, we can assure a giant component.

\subsection{Tools and Notation}\label{S:Tools}

For a given graph $G$, the normalized Laplacian matrix for $G$ is the matrix $\Laplace{G}=I-D^{-\half}AD^{-\half}$.  We denote the eigenvalues of $\Laplace{G}$ by $0=\lambda_0\leq \lambda_1\leq\dots\leq\lambda_{n-1}$. If there is any ambiguity, we write $\lambda_i(\Laplace{G})$ to specify that the eigenvalues are from the normalized Laplacian, and more generally $\lambda_i(M)$ to denote the $i^{\textrm{th}}$ smallest eigenvalue of a Hermitian matrix $M$. We sometimes refer to these as the Laplacian eigenvalues of $G$. We shall use the following standard facts from spectral graph theory.

\begin{theorem}[see, for example, \cite{Chung:Spectral}]\label{T:standards}
Let $G$ be a graph with Laplacian eigenvalues $0=\lambda_0\leq \lambda_1\leq \dots\leq\lambda_{n-1}$. Then
\begin{enumerate}
\item $G$ is connected if and only if $\lambda_1>0$.
\item If $G$ is connected, then the diameter $D(G)$ of $G$ satisfies $D(G)\leq \ceil{\frac{\ln(n-1)}{\ln(1/(1-\lambda_1))}}$.
\item Let $D^{-1}A$ denote the probability transition matrix of a random walk on $G$. Then $\lambda$ is an eigenvalue of $\Laplace{G}$ with eigenvector $v$ if and only if $1-\lambda$ is an eigenvalue of $D^{-1}A$ with eigenvector $v$.
\end{enumerate}
\end{theorem}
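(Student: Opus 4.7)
Since Theorem~\ref{T:standards} is a compendium of classical facts, my plan is to reproduce the standard proofs in a unified way, exploiting the similarity $\mathcal{L}(G) = I - D^{-1/2}AD^{-1/2}$ and the relationship between $D^{-1/2}AD^{-1/2}$ and the random walk operator $D^{-1}A$. I would prove (3) first, then derive (1) from (3) together with a harmonic-function argument, and then use the spectral decomposition to obtain (2).

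For (3), the key computation is $D^{-1}A = D^{-1/2}\!\left(D^{-1/2}AD^{-1/2}\right)\!D^{1/2}$, exhibiting $D^{-1}A$ as similar to $D^{-1/2}AD^{-1/2} = I - \mathcal{L}(G)$. Hence the spectra of $D^{-1}A$ and $I-\mathcal{L}(G)$ coincide, so $\mu$ is an eigenvalue of $D^{-1}A$ iff $1-\mu$ is an eigenvalue of $\mathcal{L}(G)$; the corresponding eigenvectors are related by the change of basis $D^{\pm 1/2}$ as needed for the statement. For (1), $\mathcal{L}(G)$ is positive semidefinite by its Rayleigh form $f^\top \mathcal{L}(G) f = \sum_{ij\in E}\bigl(f(i)/\sqrt{d_i} - f(j)/\sqrt{d_j}\bigr)^2$, and $D^{1/2}\mathbf{1}$ always lies in the kernel. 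To identify when $\lambda_1 = 0$, I would transfer a null vector of $\mathcal{L}(G)$ via part (3) to a $D^{-1}A$-harmonic function $u$ satisfying $u(v) = \sum_{w\sim v} u(w)/d_v$, then invoke the finite maximum principle to conclude that $u$ is constant on each connected component. This produces a null space of dimension equal to the number of components of $G$, yielding (1).

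Part (2) is the main obstacle, as it is the only quantitative claim. Let $N = I - \mathcal{L}(G)$ with orthonormal eigenbasis $\phi_0,\phi_1,\dots,\phi_{n-1}$, where $\phi_0 = D^{1/2}\mathbf{1}/\sqrt{\mathrm{Vol}(G)}$ corresponds to eigenvalue $1$ and $\phi_i$ corresponds to eigenvalue $1-\lambda_i$. The spectral expansion gives
\[
(N^k)_{uv} = \frac{\sqrt{d_u d_v}}{\mathrm{Vol}(G)} + \sum_{i\geq 1}(1-\lambda_i)^k\,\phi_i(u)\phi_i(v).
\]
If the graph distance from $u$ to $v$ exceeds $k$, then $(A^k)_{uv} = 0$, which forces $(N^k)_{uv} = 0$ since $N^k = D^{-1/2}A^kD^{-1/2}$ up to rescaling that preserves the support pattern. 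Applying Cauchy--Schwarz to the tail together with Parseval's identity $\sum_{i\geq 0}\phi_i(w)^2 = 1$ yields
\[
\left|\sum_{i\geq 1}(1-\lambda_i)^k\phi_i(u)\phi_i(v)\right| \leq (1-\lambda_1)^k \sqrt{(1-\phi_0(u)^2)(1-\phi_0(v)^2)}.
\]
Combining these two observations, any pair $(u,v)$ at distance strictly greater than $k$ satisfies $(1-\lambda_1)^k \geq \sqrt{d_u d_v}/\mathrm{Vol}(G)$ divided by the Parseval factor. The technical heart of the argument is to bound the resulting ratio by $1/(n-1)$ — which amounts to showing $(1-\phi_0(u)^2)(1-\phi_0(v)^2) \leq (1 - d_u/\mathrm{Vol}(G))(1-d_v/\mathrm{Vol}(G))$ and then estimating the degree terms crudely — at which point taking logarithms and solving for $k$ gives the claimed $\lceil \ln(n-1)/\ln(1/(1-\lambda_1))\rceil$ bound on the diameter. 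The remainder of the argument is routine, and the full details can be found in Chung's monograph.
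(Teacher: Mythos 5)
The paper itself offers no proof of Theorem~\ref{T:standards} --- it is quoted from Chung's monograph --- so the only question is whether your reconstruction is sound. Your treatments of parts (3) and (1) are fine: the similarity $D^{-1}A = D^{-1/2}\paren{I-\Laplace{G}}D^{1/2}$, and the identification of the kernel of $\Laplace{G}$ with functions harmonic for $D^{-1}A$ (hence constant on components by the maximum principle), are the standard arguments, and your remark that the eigenvectors are carried across by $D^{\pm 1/2}$ is in fact more careful than the statement as printed.

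Part (2) contains a genuine gap, located exactly at your displayed inequality. Bounding the tail $\sum_{i\geq 1}(1-\lambda_i)^k\phi_i(u)\phi_i(v)$ by $(1-\lambda_1)^k\sqrt{(1-\phi_0(u)^2)(1-\phi_0(v)^2)}$ requires $\abs{1-\lambda_i}\leq 1-\lambda_1$ for every $i\geq 1$, i.e.\ $\lambda_{n-1}\leq 2-\lambda_1$; this fails whenever the top of the spectrum is near $2$ (for bipartite graphs $\lambda_{n-1}=2$ and those terms do not decay at all). The failure is not cosmetic: for the $3$-cube one has $\lambda_1=\nicefrac{2}{3}$ and $n-1=7$, so the bound you are trying to derive would give diameter at most $\ceil{\ln 7/\ln 3}=2$, while the diameter is $3$. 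What your computation honestly yields is the bound with $\bar\lambda=\max\set{1-\lambda_1,\ \lambda_{n-1}-1}$ in place of $1-\lambda_1$; to get a bound controlled by the lower gap alone one must replace $(I-\Laplace{G})^k$ by a shifted polynomial such as $\paren{I-\tfrac{2}{\lambda_1+\lambda_{n-1}}\Laplace{G}}^k$, which gives the classical $\ln\frac{\lambda_{n-1}+\lambda_1}{\lambda_{n-1}-\lambda_1}$ version --- and, since $\frac{\lambda_{n-1}+\lambda_1}{\lambda_{n-1}-\lambda_1}\geq\frac{2+\lambda_1}{2-\lambda_1}$, that corrected version still delivers the qualitative consequence the paper actually uses in Theorem~\ref{T:Sepsilon}, namely diameter $\bigOh{\ln n}$ once $\lambda_1$ is bounded away from zero. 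A secondary issue: the step you call routine, reducing $\sqrt{(\vol{G}-d_u)(\vol{G}-d_v)/(d_ud_v)}$ to $n-1$, is automatic only for regular graphs, where $\phi_0$ is the normalized all-ones vector; for irregular graphs (two leaves of a star, say) that ratio exceeds $n-1$, so the clean $\ln(n-1)$ numerator likewise requires either regularity or a volume/degree-dependent replacement.
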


Among our key tools will be the following theorem from Chung and the first author \cite{Radcliffe:Spectra} that gives spectral concentration in the normalized Laplacian of a general random graph.

\begin{theorem}[\cite{Radcliffe:Spectra}]\label{T:IndepSpec}
  Let $G$ be a random graph with independent edges generated according
  to the matrix $\mathcal{P}$.  Let $\mathcal{D}$ be the diagonal matrix of expected degrees and let $\delta$ denote the minimum expected
  degree. If  $\delta \geq 3\ln\paren{\frac{4n}{\epsilon}}$, then with probability
  at least $1-\epsilon$, for all $i$ \[ \abs{\lambda_i\paren{\Laplace{G}} -
    \lambda_i\paren{I - \mathcal{D}^{-\half}\mathcal{P}\mathcal{D}^{-\half}}} \leq
  3\sqrt{\frac{3\ln\paren{\frac{4n}{\epsilon}}}{\delta}}. \] 
\end{theorem}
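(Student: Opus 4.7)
The plan is to bound $\lambda_i\paren{\Laplace{G}} - \lambda_i\paren{I - \mathcal{D}^{-\half}\mathcal{P}\mathcal{D}^{-\half}}$ via Weyl's inequality, which states that $\abs{\lambda_i(M) - \lambda_i(N)} \le \norm{M - N}$ for Hermitian $M, N$. Writing the difference of the two matrices as
\[
\mathcal{D}^{-\half}\mathcal{P}\mathcal{D}^{-\half} - D^{-\half}AD^{-\half} = \mathcal{D}^{-\half}(\mathcal{P} - A)\mathcal{D}^{-\half} + \paren{\mathcal{D}^{-\half}A\mathcal{D}^{-\half} - D^{-\half}AD^{-\half}},
\]
the task reduces to controlling the operator norm of each piece on a high-probability event and then combining via a union bound.

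For the first piece, I would apply a matrix Chernoff/Bernstein inequality to the zero-mean sum
\[
\mathcal{D}^{-\half}(A - \mathcal{P})\mathcal{D}^{-\half} = \sum_{i<j}(a_{ij} - p_{ij})\,\mathcal{D}^{-\half}(e_i e_j^T + e_j e_i^T)\mathcal{D}^{-\half}.
\]
Each summand has operator norm at most $1/\delta$, and the matrix variance is controlled by $\max_i \sum_j p_{ij}/(\mathcal{D}_{ii}\mathcal{D}_{jj}) \le 1/\delta$. Matrix Bernstein then produces, with probability $1 - \eps/2$, a bound of order $\sqrt{\ln(n/\eps)/\delta}$; the hypothesis $\delta \ge 3\ln(4n/\eps)$ is exactly what is needed to put the deviation into the sub-Gaussian regime of Bernstein so that the explicit constant $3\sqrt{3\ln(4n/\eps)/\delta}$ emerges cleanly. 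For the second piece, I would use scalar Chernoff with a union bound over the $n$ vertices to show that every degree satisfies $\abs{D_{ii} - \mathcal{D}_{ii}} \le \eta\, \mathcal{D}_{ii}$ with $\eta = \bigOh{\sqrt{\ln(n/\eps)/\delta}}$, on an event of probability $1 - \eps/2$. Writing $D^{-\half} = \mathcal{D}^{-\half}(I + E)$ with $\norm{E}$ of order $\eta$, the second piece has operator norm bounded by a constant multiple of $\eta \cdot \norm{\mathcal{D}^{-\half}A\mathcal{D}^{-\half}}$, and this latter norm is at most $1$ plus the bound from the first piece, since the expected normalized adjacency has spectral radius at most $1$.

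The main obstacle is the matrix concentration step. Because the $a_{ij}$ are independent but not identically distributed, and the conjugation by $\mathcal{D}^{-\half}$ weights each summand differently, the matrix variance proxy must be computed with care to get both the correct dependence on $\delta$ and the explicit constants in the statement. A secondary difficulty is the probability budgeting: the two bad events must each be controlled at level $\eps/2$, and the precise constant $3$ appearing in the hypothesis $\delta \ge 3\ln(4n/\eps)$ is what absorbs both the matrix-dimension factor from matrix Bernstein and the $\ln(n)$ union-bound overhead for the degrees without blowing up the final constant $3\sqrt{3\ln(4n/\eps)/\delta}$.
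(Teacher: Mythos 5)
Theorem \ref{T:IndepSpec} is not proved in this paper but imported from \cite{Radcliffe:Spectra}, and your outline follows essentially the same route as the proof given there: Weyl's inequality to pass from an operator-norm bound to eigenvalue differences, the split into the centered piece $\mathcal{D}^{-\half}(A-\mathcal{P})\mathcal{D}^{-\half}$ controlled by a matrix Chernoff/Bernstein inequality with variance proxy of order $1/\delta$, and the degree-replacement piece $\mathcal{D}^{-\half}A\mathcal{D}^{-\half}-D^{-\half}AD^{-\half}$ controlled by scalar Chernoff bounds on the $D_{ii}$ with a union bound, the hypothesis $\delta\geq 3\ln\paren{\frac{4n}{\epsilon}}$ serving exactly to keep the relative degree fluctuations small. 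Your sketch leaves the explicit constant-tracking implicit, but the decomposition and the two concentration steps are the same as in the cited argument, so the approach is correct.
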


We also make use of standard tools in spectral graph theory, chief among them the Cheeger inequality. For two sets $S, T$ of vertices in a graph $G$, define $e_G(S, T)$ to be the number of edges (or, in a weighted graph, the total weight of edges) for which one endpoint is in $S$ and the other in $T$. Note that an edge with both endpoints in $S\cap T$ is counted twice in this definition. Define $\Vol_G(S)=\sum_{v\in S} \deg(v)$. When the underlying graph is clear, we drop the subscript $G$ in the notation.

The Cheeger constant of a set $S$ with $\vol{S}\leq
\frac{1}{2}\vol{G}$ is defined to be $h(S)=\nicefrac{e(S, V\ba S)}{\vol{S}}$ and
Cheeger constant of $G$ is \[h_G=\min_{\substack{S\subset V\\
    \vol{S}\leq \frac{1}{2}\vol{G}}} h(S).\] The spectrum of a graph
is related to the Cheeger constant via the Cheeger Ineqaulity~\cite{Sinclair:MCMC,Jerrum:gap}.

\begin{cheeger}
For $G$ any graph, let $\lambda_1$ be the smallest nontrivial eigenvalue of $\L(G)$. Then
\[ \frac{1}{2}h_G^2\leq \lambda_1\leq 2h_G.\]
\end{cheeger}

As we will frequently be discussing Markov chains, we will pass
regularly between considering row vectors and column vectors. We will
always treat the signature of a vertex $v$ as a row vector, as well as
the vector $L$. The all-ones vector, $\one$, will be considered a row
vector as well. However, eigenvectors of a matrix are typically
assumed to be right eigenvectors, and are thus column vectors. Any other usages should be made clear by context.

In order to understand the rate of convergence of a Markov chain we
will use the relative pointwise distance. If $\pi$ is the
limiting distribution of the Markov chain, the relative pointwise distance of a
distribution $\sigma$ from $\pi$ is 
\[ \Delta_{RP}(\sigma) =  \max_i \frac{
  \abs{\sigma_i - \pi_i}}{\pi_i}.\]  
As we are interested in an overall rate of convergence we define
\[ \Delta(s) = \sup_{\sigma \in \mathcal{S}} \Delta_{RP}\paren{\sigma M^s}.\]
It is well known that the rate of decay of the relative pointwise distance can be controlled
by the spectral information of the Markov chain as given in the
following theorem, see for instance \cite{Chung:Spectral}.
\begin{theorem}\label{TVDist}
 Let $1=\lambda_0\geq\lambda_1\geq\dots\geq\lambda_{n-1}$ be the
 eigenvalues of the transition probability matrix of a uniform random
 walk on a connected, non-bipartite (weighted) graph $G$. Set $\lambda =
 \max\set{\abs{1-\lambda_1},{\abs{\lambda_{n-1}-1}}}$.  For any \[s>\frac{1}{\lambda}\ln\paren{\frac{\vol{G}}{\epsilon\delta_G}},\] we have $\Delta(s)<\epsilon$, where $\delta_G$ denotes the minimum degree in $G$.
\end{theorem}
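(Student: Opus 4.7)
The plan is to execute the standard spectral mixing argument on the symmetrized transition matrix and then read off a coordinatewise bound on $|(\sigma M^s)_v - \pi_v|/\pi_v$.

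First I would work with $\tilde M = D^{-\half} A D^{-\half}$, which is symmetric and similar to $M = D^{-1}A$ via $\tilde M = D^{\half} M D^{-\half}$; consequently $\tilde M$ and $M$ share the eigenvalues $1 = \lambda_0 \geq \lambda_1 \geq \cdots \geq \lambda_{n-1}$, and $\tilde M$ admits an orthonormal eigenbasis $\phi_0, \phi_1, \ldots, \phi_{n-1}$. A direct check gives $\phi_0 = D^{\half} \one^T / \sqrt{\vol{G}}$, so that $\phi_0^T D^{\half} = (d_1, d_2, \ldots, d_n)/\sqrt{\vol{G}}$. The connectedness and non-bipartiteness hypotheses guarantee $\lambda_1 < 1$ and $\lambda_{n-1} > -1$, hence $\max_{i \geq 1}|\lambda_i| < 1$.

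Next, for an arbitrary distribution $\sigma$ (row vector), I would use the identity $M^s = D^{-\half} \tilde M^s D^{\half}$ and expand $\sigma D^{-\half} = \sum_i a_i \phi_i^T$ in the orthonormal basis. The $i=0$ coefficient is $a_0 = \sigma D^{-\half} \phi_0 = 1/\sqrt{\vol{G}}$ since $\sigma \one^T = 1$, and so the $i = 0$ term contributes exactly
\[
a_0 \phi_0^T D^{\half} = \frac{1}{\vol{G}}(d_1,\ldots,d_n) = \pi,
\]
independently of $s$. Therefore $\sigma M^s - \pi = \sum_{i \geq 1} a_i \lambda_i^s \phi_i^T D^{\half}$, and extracting the $v$-th coordinate and dividing by $\pi_v = d_v/\vol{G}$ gives
\[
\frac{\abs{(\sigma M^s)_v - \pi_v}}{\pi_v} \leq \frac{\vol{G}}{\sqrt{d_v}}\paren{\max_{i \geq 1}\abs{\lambda_i}}^s \sum_{i \geq 1} \abs{a_i}\,\abs{(\phi_i)_v}.
\]

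To finish I would apply Cauchy-Schwarz to the remaining sum. Orthonormality of $\{\phi_i\}$ bounds $\sum_{i \geq 1}(\phi_i)_v^2 \leq \sum_i (\phi_i \phi_i^T)_{vv} = 1$, while $\sum_i a_i^2 = \norm[2]{\sigma D^{-\half}}^2 = \sum_u \sigma_u^2/d_u \leq 1/\delta_G$, the last inequality using $\sigma_u \leq 1$ and $\sum_u \sigma_u = 1$. Combining these and also the bound $\sqrt{d_v} \geq \sqrt{\delta_G}$ yields the uniform estimate $\Delta(s) \leq (\max_{i \geq 1}\abs{\lambda_i})^s \, \vol{G}/\delta_G$; setting $\max_{i \geq 1}|\lambda_i| = 1 - \lambda$ (the absolute-value spectral gap of $M$, i.e., the quantity that controls the decay) and using $(1-\lambda)^s \leq e^{-\lambda s}$, requiring the right-hand side to be below $\epsilon$ produces the threshold $s > \frac{1}{\lambda}\ln(\vol{G}/(\epsilon \delta_G))$ stated in the theorem. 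There is no substantive obstacle beyond bookkeeping; the only subtlety is matching the abstract quantity $\max_{i \geq 1}|\lambda_i|$ to the $\lambda$ defined via $\lambda_1$ and $\lambda_{n-1}$, and verifying that non-bipartiteness is exactly what keeps $\lambda > 0$.
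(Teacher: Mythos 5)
The paper itself gives no proof of this theorem---it is quoted from Chung's \emph{Spectral Graph Theory}---and your argument is precisely the standard proof from that source: symmetrize to $\tilde M = D^{-\half}AD^{-\half}$, peel off the top eigenvector $D^{\half}\one^T/\sqrt{\vol{G}}$, whose contribution is exactly $\pi$, bound the remaining coordinates by Cauchy--Schwarz to get $\Delta(s)\le \frac{\vol{G}}{\delta_G}\,\bar\lambda^{\,s}$ with $\bar\lambda=\max_{i\ge 1}\abs{\lambda_i}$, and convert via $\bar\lambda^{\,s}\le e^{-(1-\bar\lambda)s}$. Each of these steps is correct as you present it, including $\sum_i a_i^2=\sum_u \sigma_u^2/d_u\le 1/\delta_G$ and $\sum_i(\phi_i)_v^2=1$.

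The one point you defer---``matching $\max_{i\ge1}\abs{\lambda_i}$ to the $\lambda$ defined via $\lambda_1$ and $\lambda_{n-1}$''---is not mere bookkeeping, and for the statement as literally printed it cannot be done. Your proof yields decay at rate $1-\bar\lambda=\min\set{1-\lambda_1,\,1+\lambda_{n-1}}$, the absolute spectral gap, so the threshold you actually obtain is $s>\frac{1}{\min\set{1-\lambda_1,\,1+\lambda_{n-1}}}\ln\paren{\frac{\vol{G}}{\epsilon\delta_G}}$. The theorem's $\lambda=\max\set{\abs{1-\lambda_1},\abs{\lambda_{n-1}-1}}$ simplifies (since $\lambda_{n-1}\le\lambda_1\le 1$) to $1-\lambda_{n-1}$, which is in general strictly larger than the absolute gap; with that reading the claim is false (take a connected non-bipartite graph with $\lambda_1$ near $1$ and $\lambda_{n-1}$ near $-1$: the printed bound would permit $s$ of order $\frac{1}{2}\ln\paren{\frac{\vol{G}}{\epsilon\delta_G}}$, although mixing genuinely requires order $\frac{1}{1-\lambda_1}\ln\paren{\frac{\vol{G}}{\epsilon\delta_G}}$). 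The printed formula is evidently a transcription slip for the absolute spectral gap, which is the constant in Chung's theorem and is how $\lambda$ is used later in the paper (in Theorem \ref{T:SnakeOne} it is simply called the spectral gap of $W$). So your argument is the right one and proves the intended statement; you should, however, state explicitly that your decay constant is $\min\set{1-\lambda_1,\,1+\lambda_{n-1}}$, note that connectivity and non-bipartiteness are what make it positive, and flag the discrepancy with the printed $\max$ rather than asserting that the two quantities match.
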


The phrase \emph{asymptotically almost surely} in this paper will
always refer to asymptotics with respect to $t$, unless otherwise
noted. The norm $\| v \|$ will refer to the $\ell_{\infty}$-norm unless otherwise noted.

\section{Key Results}\label{Structural}

To prove the thresholds for connectivity and emergence of the giant
component in a stochastic Kronecker graph $G$ (Theorem \ref{T:Master},
items (\ref{M:giant}) and (\ref{M:connected})), we will use the
following structure. First, we show that $G$ contains a small set of
vertices that is connected asymptotically almost surely, in particular, those vertices that
are close to stationarity under the Markov chain described in Section
\ref{MCdescription}. We shall refer to this set as the ``connected
core'' of the graph. Although this will not be enough vertices to form a
giant component, we can then show that under certain conditions,
a positive fraction of the vertices in $G$ can be connected by a path to the
connected core. The thresholds given are precisely those conditions
needed to ensure that a positive fraction of the vertices exhibit this behavior. In retrospect, the arguments used by  Horn and the first author in
\cite{Radcliffe:KroneckerGiant} to show the emergence of the giant
component in the case where the generating matrix is $2\times
2$ can be viewed as a special case of our technique. Specifically, as the underlying Markov chain has only two states, the degree of each vertex is controlled by a single parameter, which significantly simplifies the argument. As a consequence, the authors in \cite{Radcliffe:KroneckerGiant} were able to analyze the giant component directly via counting techniques, without appealing to the underlying Markov chain.

In this section, we develop much of the underlying structure in $G$
via the random walk on $W$.  We begin with some elementary observations on the vertex degrees in $G$ and $W^{\otimes t}$.

\begin{lemma}\label{agoodneighbor}
Let $v$ be a vertex with signature $\sigma$ in a \skg\ $G$, such that $\inner{\sigma}{L}>0$. Let $d=e^{\inner{\sigma}{L}}$. For any $\delta>0$, we have
\begin{enumerate}
\item\label{concinGbar} $v$ has at least
  $d^t(1-2ke^{-2\delta^2t})$ neighbors
  in $W^{\otimes t}$ with signature $\tau$ such that
  $\norm{\tau-\expect{Z^{(v)}}}\leq \delta$.
\item\label{agoodneighborG} with probability at least $1-\exp(-\frac{d^{t}}{8}(1-2ke^{-2\delta^2t}))$, $v$ has at least $\frac{1}{2}d^t(1-2ke^{-2\delta^2t})$ neighbors in $G$ with signature $\tau$ such that $\norm{\tau-\expect{Z^{(v)}}}\leq \delta$. 
\end{enumerate}
\end{lemma}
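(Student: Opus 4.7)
The plan is to prove Part (\ref{concinGbar}) by applying Hoeffding's inequality to each coordinate of the random signature $Z^{(v)}$, and then deduce Part (\ref{agoodneighborG}) via a multiplicative Chernoff bound that accounts for the random presence of edges in $G$.

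First I would record that the total weighted degree of $v$ in $W^{\otimes t}$ is
\[
\sum_u P^{\otimes t}_{v,u} \;=\; \prod_{j=1}^t c_{v_j} \;=\; \prod_{i=1}^k c_i^{\sigma_i t} \;=\; e^{t\inner{\sigma}{L}} \;=\; d^t.
\]
Consequently, for any set $S \subseteq \mathcal{S}$ of signatures, the total edge weight from $v$ to vertices with signature in $S$ is exactly $d^t \cdot \p(Z^{(v)} \in S)$; since each edge weight is at most $1$, this quantity is also a lower bound on the number of such vertices. Part (\ref{concinGbar}) therefore reduces to showing that $\p(\|Z^{(v)} - \expect{Z^{(v)}}\| \leq \delta) \geq 1 - 2k e^{-2\delta^2 t}$.

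For this concentration step, I would exploit the product representation established in Section \ref{MCdescription}: a random neighbor of $v$ sampled according to edge weights has its $j$th coordinate drawn independently according to $X^{(v_j)}$. Fixing a target symbol $i \in [k]$, the $i$th coordinate of $Z^{(v)}$ is the normalized sum
\[
Z^{(v)}_i \;=\; \frac{1}{t} \sum_{j=1}^t \one\!\left[ X^{(v_j)} = i \right],
\]
an average of $t$ independent Bernoulli indicators bounded in $[0,1]$. Hoeffding's inequality yields $\p(|Z^{(v)}_i - \expect{Z^{(v)}_i}| > \delta) \leq 2 e^{-2 \delta^2 t}$, and a union bound over the $k$ coordinates upgrades this to the claimed $\ell_\infty$ tail bound, completing Part (\ref{concinGbar}).

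For Part (\ref{agoodneighborG}), let $N$ denote the number of $G$-neighbors of $v$ whose signature lies within $\delta$ of $\expect{Z^{(v)}}$. Since each potential edge $\{u,v\}$ is present independently with probability $P^{\otimes t}_{u,v}$, $N$ is a sum of independent Bernoullis with $\expect{N} \geq d^t(1 - 2k e^{-2\delta^2 t})$ by Part (\ref{concinGbar}); the standard multiplicative Chernoff bound $\p(N < \expect{N}/2) \leq \exp(-\expect{N}/8)$ then yields the stated failure probability. The main point of care is simply to keep the two layers of randomness disentangled: the Hoeffding step treats the word coordinates of a random neighbor in the deterministic weighted graph $W^{\otimes t}$, while the Chernoff step treats the independent inclusion of edges in $G$ conditioned on that deterministic structure. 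I do not anticipate any deeper technical obstacle.
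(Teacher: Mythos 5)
Your proposal is correct and follows essentially the same route as the paper: Hoeffding's inequality on each coordinate of $Z^{(v)}$ plus a union bound over the $k$ coordinates for part (\ref{concinGbar}), then a multiplicative Chernoff bound on the independent edge indicators for part (\ref{agoodneighborG}). The only difference is that you spell out the step the paper leaves implicit, namely that $\deg_{W^{\otimes t}}(v)=d^t$ and that total edge weight (each weight being at most $1$) lower-bounds the neighbor count, which is a fine addition.
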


\begin{proof}
  By the Hoeffding inequality, we have that for any $i$,
\[\p\left(t\abs{Z^{(v)}_i-\expect{Z^{(v)}_i}} >\delta t\right)\leq 2e^{-2\delta^2 t}\]
for any $\delta>0$. Therefore, by the union bound, we have
\[\p\left(\exists i\in[k]\hbox{ such that } t\abs{Z^{(v)}_i-\expect{Z^{(v)}_i}}>\delta t\right)\leq 2ke^{-2\delta^2t}.\]
This verifies item (\ref{concinGbar}).

For item (\ref{agoodneighborG}), note that by (\ref{concinGbar}), we
have that the expected number of neighbors of $v$ with
signature $\tau$ in the desired range is at least $d^t(1-2ke^{-2\delta^2t})$.  By Chernoff bounds, then, with probability at least
$1-\exp(-\frac{d^{t}}{8}(1-2ke^{-2\delta^2t}))$, we have at least $\frac{1}{2}d^t(1-2ke^{-2\delta^2t})$ neighbors with such a signature $\tau$.
\end{proof}

As an immediate corollary of this result we have the following.
\begin{cor}\label{C:tight} 
Let $v$ be a vertex with signature $\sigma$ in a \skg\ $G$, such that  $\inner{\sigma}{L} > 0$. Let $d =
e^{\inner{\sigma}{L}} > 1$.  With probability at least
$1-e^{-\frac{d^t}{12}}$, $v$ has at least $\frac{d^t}{3}$ neighbors
 $u$ with $\norm{\sigma(u) - \sigma M} \leq \sqrt{\frac{\ln(6k)}{2t}}$.
\end{cor}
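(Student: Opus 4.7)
The plan is to derive Corollary \ref{C:tight} as a direct specialization of Lemma \ref{agoodneighbor}(\ref{agoodneighborG}) by choosing the tolerance parameter $\delta$ so that the constants $1-2ke^{-2\delta^2t}$ and $\frac{d^t}{8}(1-2ke^{-2\delta^2t})$ that appear in the lemma collapse into the cleaner bounds $\frac{d^t}{3}$ and $\frac{d^t}{12}$ claimed in the corollary. The main task is therefore really just a parameter choice and a bookkeeping calculation; there is no substantive new probabilistic obstacle beyond what is already handled in the lemma.

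Concretely, I would set $\delta = \sqrt{\frac{\ln(6k)}{2t}}$. With this choice, a short computation gives
\[
2\delta^2 t = \ln(6k), \qquad \text{so} \qquad 2k e^{-2\delta^2 t} = 2k \cdot \frac{1}{6k} = \frac{1}{3},
\]
and hence $1 - 2ke^{-2\delta^2 t} = \frac{2}{3}$. Substituting into Lemma \ref{agoodneighbor}(\ref{agoodneighborG}), the guaranteed neighbor count becomes
\[
\tfrac{1}{2} d^t \bigl(1 - 2ke^{-2\delta^2t}\bigr) = \tfrac{1}{2}d^t \cdot \tfrac{2}{3} = \tfrac{d^t}{3},
\]
and the failure probability bound becomes
\[
\exp\Bigl(-\tfrac{d^t}{8}\bigl(1 - 2ke^{-2\delta^2t}\bigr)\Bigr) = \exp\Bigl(-\tfrac{d^t}{12}\Bigr).
\]

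To bridge the stated conclusion of Lemma \ref{agoodneighbor} with the statement of the corollary, I would recall the identity $\expect{Z^{(v)}} = \sigma M$ established in Section \ref{MCdescription}, so that the condition $\norm{\tau - \expect{Z^{(v)}}} \le \delta$ from the lemma is exactly $\norm{\sigma(u) - \sigma M} \le \sqrt{\frac{\ln(6k)}{2t}}$ as required. Putting these observations together yields the corollary immediately. The only mildly tricky point is ensuring that the hypothesis $\inner{\sigma}{L} > 0$ (equivalently $d > 1$) is the same hypothesis needed to invoke Lemma \ref{agoodneighbor} nontrivially; since that hypothesis is passed directly through, there is no real obstacle.
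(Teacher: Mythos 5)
Your proposal is correct and matches exactly the computation the paper intends when it calls Corollary \ref{C:tight} an ``immediate corollary'' of Lemma \ref{agoodneighbor}: choosing $\delta = \sqrt{\frac{\ln(6k)}{2t}}$ makes $1-2ke^{-2\delta^2 t} = \frac{2}{3}$, which produces precisely the constants $\frac{d^t}{3}$ and $\frac{d^t}{12}$, and the identification $\expect{Z^{(v)}} = \sigma M$ from Section \ref{MCdescription} bridges the statements. Nothing further is needed.
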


Recall from Section \ref{MCdescription} that $\pi=(\pi_1, \pi_2, \dots,
\pi_k)$ is the stationary distribution of the random walk on $W$, with
$\pi_i = \frac{c_i}{\vol{W}}$ for all $i$. Given $\epsilon>0$, define
$\mathcal{S}_\epsilon = \{ v\in G \mid \forall i\in[k],
\sigma_i(v)>(1-\epsilon)\pi_i\}$. Notice that if $v\in S_\eps$ with signature $\sigma$, then we have, for all $i$, 
\[(\sigma M)_i = \sum_{j=1}^k \sigma_jM_{ij}\geq (1-\eps)\sum_{j=1}^k \pi_jM_{ij} = (1-\eps)\pi_i\]
by stationarity of $\pi$. Hence, if $v\in \mathcal{S}_\eps$, then $\expect{Z^{(v)}}$ is also in $\mathcal{S}_\eps$. We will show that under appropriate
conditions, this set of vertices $\mathcal{S}_\eps$ is connected asymptotically
almost surely, forming the small connected core described above.  To do this, we will show that vertices in $\mathcal{S}_\eps$ have exponentially large degree in $t$, and then use Theorem \ref{T:IndepSpec} to show the first eigenvalue in $\mathcal{S}_\eps$ is bounded away from zero. We first must address the degree of vertices in $\mathcal{S}_\eps$. To that end, we have the following Lemma:

\begin{lemma}\label{Se_degree}
Let $G$ be a \skg\ generated by $P$ and let $\epsilon > 0$ be fixed, and assume $W$ is connected and nonbipartite.  For sufficiently
large $t$ there is a constant $a>0$, depending only on $P$ and
$\epsilon$, such that for all $v \in \mathcal{S}_{\epsilon}$, $\prob{Z^{(v)} \in
  \mathcal{S}_{\epsilon}} \geq a$. 
\end{lemma}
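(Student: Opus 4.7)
The approach combines the concentration of $Z^{(v)}$ around its mean $\sigma M$ with a multivariate CLT and a compactness argument. The starting observation, already made in the discussion preceding the lemma, is that $\expect{Z^{(v)}} = \sigma M$ lies strictly inside $\mathcal{S}_\eps$: writing $\gamma_i(\sigma) := (\sigma M)_i - (1-\eps)\pi_i$, the connectivity of $W$ together with $\sigma_j > (1-\eps)\pi_j$ for every $j$ forces $\gamma_i(\sigma) > 0$ for every $i$, while $\sum_i\sigma_i = \sum_i\pi_i = 1$ gives $\sum_i \gamma_i(\sigma) = \eps$.

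In the interior regime where $\min_i \gamma_i(\sigma) \geq \rho$ for some constant $\rho > 0$, Lemma~\ref{agoodneighbor} applied with $\delta = \rho$ gives $\prob{\norm{Z^{(v)} - \sigma M} \leq \rho} \geq 1 - 2k\exp(-2\rho^2 t)$, and this event forces $Z^{(v)} \in \mathcal{S}_\eps$, so $\prob{Z^{(v)} \in \mathcal{S}_\eps} \geq \half$ for sufficiently large $t$. The real difficulty is that $\min_i \gamma_i(\sigma)$ can be arbitrarily small as $\sigma$ approaches the boundary of $\overline{\mathcal{S}_\eps}$, so Hoeffding alone does not yield a uniform bound. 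To handle this, I would pass to the multivariate CLT: $\sqrt{t}(Z^{(v)} - \sigma M)$ converges in distribution to a centered Gaussian $N(0, \Sigma(\sigma))$ supported on the hyperplane $\{w : \sum_i w_i = 0\}$, with $\Sigma(\sigma) = \sum_j \sigma_j \Sigma_j$ and $\Sigma_j$ the covariance of a single categorical draw from $X^{(j)}$. The event $\{Z^{(v)} \in \mathcal{S}_\eps\}$ rewrites as $\{W_i > -\sqrt{t}\gamma_i(\sigma) \text{ for all } i\}$, a shifted orthant-like region which always contains a neighborhood of the origin in the hyperplane since $\gamma_i \geq 0$ and at least one $\gamma_i \geq \eps/k$ by pigeonhole. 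Since $\sigma_j \geq (1-\eps)\pi_j > 0$ uniformly and $W$ is connected and non-bipartite, $\Sigma(\sigma)$ is uniformly nondegenerate on the hyperplane; compactness of $\overline{\mathcal{S}_\eps}$ and continuity of the Gaussian measure in the covariance then yield a uniform positive lower bound.

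The main obstacle is making the uniform lower bound rigorous in the boundary regime: the Hoeffding estimate degrades as $\min_i \gamma_i \to 0$, and the convergence in the multivariate CLT is itself not uniform in $\sigma$. The set $T(\sigma) := \{i : \gamma_i(\sigma) = 0\}$ partitions $\overline{\mathcal{S}_\eps}$ into finitely many pieces with different limiting Gaussian regions, and matching the Gaussian measures across these pieces requires some care. A clean way to close the gap is a case split at the scale $\min_i \gamma_i(\sigma) = t^{-1/4}$, applying Hoeffding above this scale and a uniform multivariate local CLT (or Berry--Esseen estimate) below it; stitching the two regimes together produces the constant $a > 0$ depending only on $P$ and $\eps$.
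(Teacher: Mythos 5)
Your plan is workable, but it takes a genuinely different route from the paper's. The paper avoids the CLT machinery entirely: it picks, by the same pigeonhole step you use, a coordinate $s_1$ with slack at least $\eps/k$, takes a breadth-first traversal $s_1,\dots,s_k$ of the two-step graph $W'$ (connected precisely because $W$ is connected and non-bipartite), and then conditions, one coordinate at a time, the counts $Z_{us_i}$ to lie in a window $[\mu-2\alpha_i\sqrt{t},\,\mu-\alpha_i\sqrt{t}]$ \emph{below} their conditional means; each such event has probability bounded below by a constant via the scalar binomial estimate of Observation~\ref{O:sqrt}, and forcing a count $\Theta(\sqrt{t})$ below its mean pushes the conditional means of the remaining counts up by $\Theta(\sqrt{t})$, so with $\alpha_i=(2k/p_{\min})^{k-i}$ the surplus cascades along the traversal and covers exactly the coordinates where your margin $\gamma_i(\sigma)$ may vanish. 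That argument is elementary and self-contained (at the cost of a constant $a$ exponentially small in $k$ and some delicate conditioning bookkeeping), whereas your route requires importing a uniform multivariate Berry--Esseen bound over convex sets together with the uniform nondegeneracy of $\Sigma(\sigma)$ on the hyperplane $\{w:\sum_i w_i=0\}$; your nondegeneracy argument is correct and uses connectivity and non-bipartiteness in essentially the same way the paper uses connectivity of $W'$. One assertion in your sketch does need repair: when $T(\sigma)\neq\emptyset$ the region $\{w:\sum_i w_i=0,\ w_i>-\sqrt{t}\,\gamma_i(\sigma)\ \forall i\}$ does \emph{not} contain a neighborhood of the origin, since the constraints $w_i>0$ for $i\in T(\sigma)$ place the origin on its boundary. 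What saves the argument is the slack coordinate $i_0$ with $\gamma_{i_0}\geq\eps/k$: for $t$ large the region contains the fixed bounded open set $\{w:\sum_i w_i=0,\ \|w\|<1,\ w_i>0\ \forall i\neq i_0\}$, whose Gaussian measure is bounded below uniformly over the $k$ choices of $i_0$ and over the compact family of covariances. With that correction (and Berry--Esseen applied throughout, which makes the case split at scale $t^{-1/4}$ unnecessary), your argument closes and yields the same conclusion.
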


To prove this Lemma, we make use of the following standard observation about binomial random variables.

\begin{obs}\label{O:sqrt}
Let $\alpha_1 > \alpha_2$ be fixed constants and let $p \in (0,1)$.  There
exists constants $c$ and $n_0$, depending on $\alpha_1,\alpha_2,$ and $p$ such
that if $n > n_0$, then 
 \[ \prob{\bin{n,p} \in [np - \alpha_1\sqrt{np}, np - \alpha_2\sqrt{np}]} > c.\]
\end{obs}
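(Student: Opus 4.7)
The plan is to reduce the claim to the classical De Moivre--Laplace central limit theorem. Let $X_n \sim \bin{n,p}$, so that $X_n$ has mean $np$ and standard deviation $\sqrt{np(1-p)}$. Standardizing, the event in question,
\[
X_n \in [np - \alpha_1\sqrt{np},\; np - \alpha_2\sqrt{np}],
\]
is equivalent to the event
\[
\frac{X_n - np}{\sqrt{np(1-p)}} \in \left[\frac{-\alpha_1}{\sqrt{1-p}},\; \frac{-\alpha_2}{\sqrt{1-p}}\right].
\]
Crucially, since $\alpha_1 > \alpha_2$ and $p \in (0,1)$ is fixed, the resulting interval on the right is a fixed interval of positive length whose endpoints do not depend on $n$.

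Next I would invoke the De Moivre--Laplace theorem, which asserts that the standardized binomial $(X_n - np)/\sqrt{np(1-p)}$ converges in distribution to a standard normal random variable as $n \to \infty$. Since the endpoints of the target interval are continuity points of the Gaussian CDF $\Phi$, this gives
\[
\prob{X_n \in [np - \alpha_1\sqrt{np},\; np - \alpha_2\sqrt{np}]} \;\longrightarrow\; \Phi\!\paren{\frac{-\alpha_2}{\sqrt{1-p}}} - \Phi\!\paren{\frac{-\alpha_1}{\sqrt{1-p}}}.
\]
Because $\Phi$ is strictly increasing and the enclosing interval has positive length, this limit is a strictly positive constant $c_0 = c_0(\alpha_1,\alpha_2,p)$.

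Finally, I would choose $c = c_0/2$ and apply the definition of convergence to select $n_0$ (depending on $\alpha_1, \alpha_2, p$) large enough so that for all $n > n_0$ the probability is within $c_0/2$ of $c_0$, and hence exceeds $c$. There is no real obstacle here: this is essentially a textbook application of the CLT, and the only mild bookkeeping is tracking that the dependence of $n_0$ and $c$ on the parameters $\alpha_1,\alpha_2,p$ is the one claimed in the statement. An alternative route, which avoids invoking the CLT, would be a direct Stirling-type estimate of the binomial probability $\binom{n}{m}p^m(1-p)^{n-m}$ at a single term $m \approx np - \alpha\sqrt{np}$ and summing over $\Theta(\sqrt{n})$ such terms, but the CLT version is cleaner and sufficient for the intended use.
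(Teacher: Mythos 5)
Your argument is correct: the standardization is done properly (the event becomes the standardized variable lying in the fixed interval $[-\alpha_1/\sqrt{1-p},\,-\alpha_2/\sqrt{1-p}]$ of positive length), De Moivre--Laplace gives convergence of the probability to a strictly positive Gaussian constant, and taking $c$ to be half that limit with $n_0$ from the definition of convergence yields exactly the claimed dependence on $\alpha_1,\alpha_2,p$. The paper states this observation as a standard fact without proof, and your central limit theorem argument is precisely the standard justification it implicitly relies on, so there is nothing to reconcile.
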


\begin{proof}[Proof of Lemma \ref{Se_degree}]
Let $v$ be an arbitrary vertex in $\mathcal{S}_{\epsilon}$. Consider a collection of independent,
identically distributed random variables, 
$X_1, \ldots, X_t$, taking on values in $\set{1,\ldots, k}$ each with
probability $p_i$, where $p_i \geq p > 0$ for all $i$.  Let
$Z_i$ be the count of the number of $i$'s in these variables, that is,
$Z_i = \sum_j \one_{X_j = i}$.  For $c>0$,
let $\mathcal{E}_i$ be the event that $p_it - 2c\sqrt{t} \leq Z_i \leq
p_it - c\sqrt{t}$.  We then have that, for all  
$j \neq i$, 
\begin{align*}
\expect{ Z_j \mid \mathcal{E}_i} &\geq \paren{t - \paren{p_it -
    c\sqrt{t}}} \frac{p_j}{1-p_i} \\
&= \paren{(1-p_i)t + c\sqrt{t}} \frac{p_j}{1-p_i} \\
&= p_jt + \frac{c p_j}{1-p_i}\sqrt{t} \\
&\geq p_jt + c p \sqrt{t}. 
\end{align*}

To apply this observation to the context of $Z^{(v)}$ we first
consider the unweighted graph $W'$ on $[k]$ where $i \sim j$ if and
only if there is an unweighted walk of length 2 between $i$ and $j$ in
$W$.  Since $W$ is connected and non-bipartite, $W'$ is connected and thus there
exists a breadth-first traversal of $W'$.  As noted above, by the
definition of $\mathcal{S}_{\epsilon}$, for every $i$ we have $(\sigma M)_i \geq
(1-\epsilon)\pi_i$.  Further, by the pigeonhole
principle, there is some index $i$ such that $(\sigma M)_i \geq \pi_i(1-\epsilon)
+ \frac{\epsilon}{k}$.  Let $s_1$ be one such index and let $s_1,
\ldots, s_k$ be a breadth-first traversal of $W'$ starting at $s_1$.

Recall that we may analyze $Z^{(v)}$ from the point of view of the
product distribution $\paren{X^{(1)}}^{\sigma_1 t} \times \cdots \times
\paren{X^{(k)}}^{\sigma_k t}$ where each $X^{(i)}$ is an independent random
variable that takes values in the set of neighbors of $i$ in $W$.
Let the random variables $Z_{ij}$ be the number of times that
$X^{(i)}$ takes on the value $j$.  We note that we can ignore the
indices that $X^{(i)}$ can not take on, and so define $p_i = \min_{j,
  p_{ij} \neq 0} \frac{p_{ij}}{c_i}$.  We recursively define the events
$\mathcal{A}_1,\ldots, \mathcal{A}_k$ as follows.  The event
$\mathcal{A}_1$ is the event that for all $u \sim_W s_1$,
$\expect{Z_{us_1}} - 2\alpha_1\sqrt{t} \leq Z_{us_1} \leq \expect{Z_{us_1}}
  - \alpha_1\sqrt{t}.$  For all $1 < i \leq k$ the event $\mathcal{A}_i$ is
  the event that for all $u\sim_W s_i$, 
\begin{equation}\label{abovecalculations} \expect{Z_{us_i} \mid \cap_{j=1}^{i-1} \mathcal{A}_j} -
2\alpha_i\sqrt{t} \leq Z_{us_i} \leq \expect{Z_{us_i} \mid \cap_{j=1}^{i-1} \mathcal{A}_j}
  - \alpha_i\sqrt{t}, \end{equation} where the $\alpha_i$'s are fixed constants to be chosen later.
We note that by Observation \ref{O:sqrt}, as $Z_{us_i}$ is a sum of independent indicator variables, each with probability $p_i$,  each of these events
occurs with positive probability. Thus it suffices to show that
$\cap_{i=1}^k \mathcal{A}_i$ is contained in the event $Z^{(v)} \in
\mathcal{S}_{\epsilon}$.  

For sufficiently large $t$ the event $\mathcal{A}_1$ assures that
$Z^{(v)}_{s_1} \geq (1-\epsilon)\pi_{s_1}$ by the choice of
$s_1$, specifically that $\expect{Z^{(v)}_{s_1}} \geq (1-\eps)\pi_{s_1} +
\frac{\epsilon}{k}$.

Since the sequence $s_i$ is a breadth-first search of $W'$, we have
that for all $i > 1$, there exists index $j <i$ such that $s_i
\sim_{W'} s_j$.  Thus there is some vertex $u$ that is a neighbor to
both $s_i$ and $s_j$ in $W$.    Now consider the effect of the
conditioning on the event $\mathcal{A}_j$ on $Z_{us_i}$. By (\ref{abovecalculations}) and the definition of $\mathcal{A}_j$ we have that 
$\expect{Z_{us_i} \mid \cap_{j=1}^{i-1} \mathcal{A}_j} \geq
\expect{Z_{us_i}} + \alpha_{i-1}p_{u}\sqrt{t} \geq \expect{Z_{us_i}} +
\alpha_{i-1}p_{\min}\sqrt{t}$ where $p_{\min} = \min_{i \in [k]} p_i$.
Furthermore, this gives that 
$t\expect{Z^{(v)}_{s_i} \mid \cap_{j=1}^{i-1} \mathcal{A}_j} \geq
  (1-\epsilon)\pi_{s_i}t + \alpha_{i-1}p_{\min}\sqrt{t}$.   Thus choosing
  $\alpha_i = \paren{\frac{2k}{p_{\min}}}^{k-i}$ suffices to assure that
  the event $\cap_{i=1}^k \mathcal{A}_i$ is contained in
  $\mathcal{S}_{\epsilon}$, as desired.
\end{proof}

\begin{theorem}\label{T:Sepsilon}
Let $G$ be a \skg\ generated by a matrix $P \in [0,1]^{k \times k}$
such that $W$ is connected and non-bipartite.  Further suppose that
$\sum_i c_i\ln(c_i) > 0$ and fix 
\[0 < \epsilon<\frac{\sum c_i\ln(c_i)}{\sum c_i\ln(c_i)-\vol{W}\ln(c_1)},\]
 if $\sum c_i\ln c_i\neq \vol{W}\ln(c_1)$, and $\eps>0$ if $\sum c_i\ln c_i= \vol{W}\ln(c_1)$.
 
Let $H$ be the subgraph of $G$ induced by $\mathcal{S}_{\epsilon}$.  For $t$
sufficiently large, there is a constant $d > 1$, depending on $P$
and $\epsilon$, such that $H$ is connected with diameter
$\bigOh{\ln\abs{\mathcal{S}_{\epsilon}}}$ with probability at least $1 - e^{-\bigTheta{d^t}}$.
\end{theorem}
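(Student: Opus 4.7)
The strategy is to apply Theorem~\ref{T:IndepSpec} to the induced subgraph $H$ and then invoke Theorem~\ref{T:standards} to deduce connectivity and the diameter bound. This reduces to establishing two facts about the expected weighted graph $H^*$, the restriction of $W^{\otimes t}$ to $\mathcal{S}_\epsilon$: (i) its minimum degree is at least $d_0^t$ for some constant $d_0 > 1$, and (ii) the first nontrivial eigenvalue of $\Laplace{H^*}$ is bounded below by a positive constant independent of $t$.

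For (i), let $v\in \mathcal{S}_\epsilon$ have signature $\sigma$, and decompose $\sigma_i = (1-\epsilon)\pi_i + r_i$ with $r_i\geq 0$ and $\sum_i r_i = \epsilon$. Then
\[
\inner{\sigma}{L} = (1-\epsilon)\sum_i \pi_i \ln c_i + \sum_i r_i \ln c_i \geq (1-\epsilon)\frac{\sum_i c_i \ln c_i}{\vol{W}} + \epsilon\ln c_1,
\]
and the upper bound on $\epsilon$ in the hypothesis is precisely what is required to guarantee that this is a strictly positive constant $\kappa > 0$. Hence $\expect{\deg_G(v)} = e^{t\inner{\sigma}{L}}\geq e^{\kappa t}$ uniformly over $v\in\mathcal{S}_\epsilon$. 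Lemma~\ref{Se_degree} then ensures that at least an $a$-fraction of this expected weight lands inside $\mathcal{S}_\epsilon$, so the minimum degree of $H^*$ is at least $ae^{\kappa t}$, which gives (i) for any $d_0\in (1, e^\kappa)$.

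Fact (ii) is the main obstacle. By the Cheeger inequality applied to the weighted graph $H^*$ it suffices to bound its Cheeger constant below. The global graph $W^{\otimes t}$ has Cheeger constant bounded away from zero: by the Kronecker product eigenvalue structure, the spectral gap of the random walk on $W^{\otimes t}$ equals that of the walk on $W$, which is a positive constant depending only on $P$. For $S\subset\mathcal{S}_\epsilon$ with $\vol_{H^*}(S)$ at most half of $\vol_{H^*}(\mathcal{S}_\epsilon)$, split
\[
e_{W^{\otimes t}}(S, \mathcal{S}_\epsilon\setminus S) = e_{W^{\otimes t}}(S, V\setminus S) - e_{W^{\otimes t}}(S, V\setminus \mathcal{S}_\epsilon).
\]
The first term is bounded below by $h_{W^{\otimes t}}\vol_{W^{\otimes t}}(S)$, and the second is bounded above by $(1-a)\vol_{W^{\otimes t}}(S)$ via Lemma~\ref{Se_degree}. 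To guarantee the resulting bound is a positive constant multiple of $\vol_{H^*}(S)$ one may have to restrict to a smaller core $\mathcal{S}_{\epsilon'}\subset\mathcal{S}_\epsilon$ on which $a$ can be driven arbitrarily close to $1$ by several steps of Markov chain mixing (Theorem~\ref{TVDist}), and then handle the thin boundary shell $\mathcal{S}_\epsilon\setminus\mathcal{S}_{\epsilon'}$ directly via Corollary~\ref{C:tight}, which shows every such vertex has many neighbors with signature closer to $\pi$.

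With (i) and (ii) in hand, apply Theorem~\ref{T:IndepSpec} to $H$ with error parameter $\exp(-d_0^t/c)$ for a sufficiently large constant $c$; since $|\mathcal{S}_\epsilon|\leq k^t$ and the minimum expected degree is $\bigOmega{d_0^t}$, the hypothesis $\delta\geq 3\ln(4n/\epsilon')$ is satisfied for large $t$. This yields $|\lambda_i(\Laplace{H}) - \lambda_i(\Laplace{H^*})| = \lilOh{1}$ for every $i$ with probability $1 - e^{-\bigTheta{d_0^t}}$, so in particular $\lambda_1(\Laplace{H})$ stays bounded below by a positive constant. Theorem~\ref{T:standards}(1) then gives that $H$ is connected, and Theorem~\ref{T:standards}(2) yields the diameter bound $\bigOh{\ln|\mathcal{S}_\epsilon|}$, all with the claimed probability.
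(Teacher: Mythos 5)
Your overall architecture matches the paper's: the degree bound $\expect{\deg(v)}\geq e^{\kappa t}$ for $v\in\mathcal{S}_\epsilon$ (your computation of $\kappa$ and the role of the hypothesis on $\epsilon$ are exactly the paper's computation of $d$), Lemma~\ref{Se_degree} to retain a constant fraction $a$ of that weight inside $\mathcal{S}_\epsilon$, a Cheeger-based spectral gap for the expected core graph (your $H^*$, the paper's $\overline{H}$) using the Kronecker eigenvalue structure of $\Laplace{W^{\otimes t}}$, and finally Theorem~\ref{T:IndepSpec} plus Theorem~\ref{T:standards} to get connectivity and the diameter of $H$. Steps (i) and the final transfer are correct and identical to the paper.

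The gap is in step (ii), which is the only delicate step, and your version of it does not close. The subtraction bound yields $e_{W^{\otimes t}}(S,\mathcal{S}_\epsilon\setminus S)\geq\bigl(h_{W^{\otimes t}}-(1-a)\bigr)\Vol_{W^{\otimes t}}(S)$, which is vacuous unless $a>1-h_{W^{\otimes t}}$; Lemma~\ref{Se_degree} supplies only \emph{some} constant $a>0$ (it forces $k$ simultaneous anti-concentration events, so $a$ is typically small), and nothing ties $a$ to $h_{W^{\otimes t}}$. Your proposed repair is not carried out, and the cuts it would have to handle are exactly where it fails: if $S$ consists mostly of vertices near the boundary of $\mathcal{S}_\epsilon$ (the shell), only an $a$-fraction of their weight stays in $\mathcal{S}_\epsilon$, so the subtracted term is again of order $(1-a)\Vol_{W^{\otimes t}}(S)$ and can swamp $h_{W^{\otimes t}}\Vol_{W^{\otimes t}}(S)$. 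Corollary~\ref{C:tight} does not fix this: it is a statement about the random graph $G$, whereas the Cheeger bound must hold for the deterministic expected graph so that Theorem~\ref{T:IndepSpec} applies, and in any case neighbors with signature near $\sigma(v)M$ may lie in $S$ itself or outside $\mathcal{S}_\epsilon$; what is needed is weight into $\mathcal{S}_\epsilon\setminus S$, not into $\mathcal{S}_\epsilon$. (You also use $e_{W^{\otimes t}}(S,V\setminus S)\geq h_{W^{\otimes t}}\Vol_{W^{\otimes t}}(S)$, which requires $\Vol_{W^{\otimes t}}(S)\leq\frac{1}{2}\vol{W^{\otimes t}}$; since the total volume concentrates on signatures near $\pi$, i.e.\ inside $\mathcal{S}_\epsilon$, this does not follow automatically from $\Vol_{H^*}(S)\leq\frac{1}{2}\Vol_{H^*}(\mathcal{S}_\epsilon)$.) The paper avoids needing $a$ near $1$ by comparing the two Cheeger ratios multiplicatively, $h_{\overline{H}}(X)\geq c\,h_{W^{\otimes t}}(X)$ with $c$ the constant of Lemma~\ref{Se_degree}, and then $h_{W^{\otimes t}}\geq\frac{1-\mu_1}{2}$ gives $\lambda_1(\overline{H})\geq\frac{c^2}{8}(1-\mu_1)^2$; until you either establish such a comparison or complete your two-scale argument for all cuts, including shell-dominated ones, the spectral-gap step — and hence the theorem — remains unproved in your write-up.
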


Notice that the bound on $\eps$ is always positive (or infinite),
since $c_1\leq c_i$ for all $i$, so $\vol{W}\ln c_1=\sum c_i\ln(c_1) \leq \sum c_i\ln(c_i)$.

\begin{proof}  
We will proceed by showing that there exists a constant $c>0$ such that the graph $H$ has $\lambda_1(H)>c$ asymptotically almost surely. As noted in Theorem \ref{T:standards}, this implies that $H$ is connected asymptotically almost surely, with diameter $\bigOh{\ln(\size{\mathcal{S}_{\epsilon}})}$.

Recall that the expected degree of a vertex with signature $\sigma$ is
$\paren{c_1^{\sigma_1} \cdots c_k^{\sigma_k}}^t$ and thus any vertex
$v \in \mathcal{S}_{\epsilon}$ has expected degree at least \[c_1^{\epsilon
  t}\paren{c_1^{\pi_1} \cdots c_k^{\pi_k}}^{(1-\epsilon)t}
= \paren{c_1^{\epsilon} \paren{c_1^{c_1} \cdots
    c_k^{c_k}}^{\frac{1-\epsilon}{\vol{W}}}}^t = d^t,\] where \[d
= c_1^{\epsilon}\paren{c_1^{c_1}\cdots c_k^{c_k}}^{\frac{1-\epsilon}{\vol{W}}}.\]  
We note that by the restriction on $\epsilon$,
\begin{align*}
\ln(d) &= \epsilon \ln(c_1) + \frac{1-\epsilon}{\vol{W}}\sum_i
c_i \ln(c_i) \\
&= \frac{1}{\vol{W}}\sum_i c_i\ln(c_i) + \epsilon \paren{ \ln(c_1) -
  \frac{1}{\vol{W}}\sum_i c_i \ln(c_i)} \\
&> 0,
\end{align*}
and thus $d > 1$.  This implies that every vertex in
$\mathcal{S}_{\epsilon}$ has expected degree exponentially increasing with $t$.  

Let $\overline{H}$ be the subgraph of $W^{\otimes t}$ induced by
$\mathcal{S}_\epsilon$, so the weight of each edge in $\overline{H}$ is the probability of that edge appearing in
$H$. Now, by Lemma \ref{Se_degree}, there is some constant $c$ such
that for every vertex $v$ in $\overline{H}$ we have
$\deg_{\overline{H}}(v) \geq c d^t$. Now for any positive constant
$\delta$, there exists some small positive constant $c'$ such that  
\[ \frac{27 \ln\paren{\frac{4\size{\mathcal{S}_{\epsilon}}}{e^{-c' d^t}}}}{c
  d^t} \leq \frac{27 \ln\paren{\frac{4k^t}{e^{-c' d^t}}}}{c
  d^t} = \frac{27\paren{t\ln(k) + \ln(4) + c'd^t}}{cd^t} = \lilOh{1} +
\frac{27 c'}{c} \leq \delta^2,\] and thus, by Theorem
\ref{T:IndepSpec}, in order 
to complete the proof it suffices to show that $\overline{H}$ has
constant spectral gap. Indeed, by Theorem 3, if there exists a constant $\zeta$ with $\lambda_1(\overline{H})>\zeta>0$, then by Theorem 3, $\lambda_1(H)\geq \lambda_1(\overline{H})-\frac{27 \ln\paren{\frac{4\size{\mathcal{S}_{\epsilon}}}{e^{-c' d^t}}}}{c
  d^t}$ with probability at least $1-e^{-c'd^t}$, and by the above, we have that $\lambda_1(H)>\zeta-\delta^2$ for any $\delta>0$ with probability at least $1-e^{-c'd^t}$, as desired.

To determine the spectral gap in $\overline{H}$, we use Cheeger's
inequality. Let $X\subset \mathcal{S}_\epsilon$ with
$\Vol_{\overline{H}}(X)<\frac{1}{2}\Vol_{\overline{H}}(\mathcal{S}_\epsilon)$. Note
that
\[ h_{\overline{H}}(X)=\frac{e_{\overline{H}}(X,
  \mathcal{S}_\epsilon\ba X)}{\Vol_{\overline{H}}(X)} \geq
\frac{ce_{W^{\otimes t}}(X, V\ba X)}{\Vol_{W^{\otimes
      t}}(X)}= c\, h_{W^{\otimes t}}(X),\]
where the constant $c$ is the constant provided by Lemma \ref{Se_degree}.
Thus, we have
\begin{align*}
h_{\overline{H}} &= \min_{\substack{X\subset \mathcal{S}_\epsilon\\ \vol{X}<\frac{1}{2}\vol{\mathcal{S}_\epsilon}}} h_{\overline{H}}(X)\\
  &\geq c\min_{\substack{X\subset \mathcal{S}_\epsilon\\ \vol{X}<\frac{1}{2}\vol{\mathcal{S}_\epsilon}}} h_{W^{\otimes t}}(X)\\
  &\geq c\, h_{W^{\otimes t}}.\end{align*}

Now, let $M_1 = C^{-\half}PC^{-\half}$ and let $1=\mu_0\geq\mu_1\geq\cdots\geq \mu_{k-1}$ be the
eigenvalues of $M_1$. 
Note that $I-M_1$ is the
Laplacian matrix for $W$, and as $W$ is connected and non-bipartite,
$-1 < \mu_{k-1} \leq \mu_1 < 1$.
  Now, $\L\!\paren{W^{\otimes t}} = I-M_1^{\otimes t}$, and thus has eigenvalues
$1-\mu_{a_1}\mu_{a_2}\cdots\mu_{a_t}$, where $a_1, a_2,
\dots ,a_t\in [k-1]\cup\{0\}$.  Hence, the smallest nonzero
eigenvalue of $\L\paren{W^{\otimes t}}$ is $1-\mu_1$, which occurs with
multiplicity $t$. Thus by Cheeger's inequality, $h_{W^{\otimes t}}\geq
\frac{1-\mu_1}{2}$.

Therefore, combining these results we have
\[\lambda_1\!\paren{\overline{H}}\geq \frac{1}{2} h_{\overline{H}}^2
\geq \frac{c}{2} h_{W^{\otimes t}}^2 \geq
\frac{c^2}{8}(1-\mu_1)^2.\]
Hence $\lambda_1\!\paren{\overline{H}}$ is bounded below by a
constant and $\overline{H}$ has constant spectral gap, as desired.
\end{proof}

This establishes that the graph $G$ contains a small connected core
asymptotically almost surely provided $\sum c_i\ln c_i>0$. We now turn our attention to the second half of our fundamental structure. Here we wish
to determine which vertices will be connected by a path to the
connected core. To that end, define $\Sigma_{\nu}= \set{v \in V(G)
  \mid \inner{\sigma(v)M^s}{L} \geq \nu \textrm{ for all } s \geq 0 }$.
 We wish to show that any vertex in $\Sigma_{\nu}$ may be connected by
 a path to $\mathcal{S}_\eps$ asymptotically almost surely.

\begin{theorem}\label{T:SnakeOne}
Let $G$ be a \skg\ generated by a matrix $P \in [0,1]^{k \times k}$
such that $W$ is connected and non-bipartite.  Fix 
$0 < \epsilon, \nu$.  Let $\lambda$ be the spectral gap of $W$ and let
$s = \ceil{\frac{1}{\lambda}\ln\paren{\frac{2\vol{W}}{c_1\epsilon}}}$.
For $t$ sufficiently large, any vertex $v \in \Sigma_{\nu}$ is
connected to $\mathcal{S}_{\epsilon}$ by a path of length at most $s$ with
probability at least $1-se^{-e^{\nu t - \bigTheta{\sqrt{t}}}}$.  
 \end{theorem}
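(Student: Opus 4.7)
The plan is to iteratively construct a walk $v = v_0, v_1, \ldots, v_s$ in $G$ whose signatures track the Markov chain orbit $\sigma(v) M^i$ to within an error of size $\bigOh{1/\sqrt{t}}$. The choice of $s$ combined with Theorem \ref{TVDist} ensures $\paren{\sigma(v) M^s}_i \geq (1-\eps/2)\pi_i$ for every $i$, and once I show the tracking error is $\lilOh{1}$ at step $s$, combining with the constant lower bound $\pi_i \geq c_1/\vol{W}$ will give $v_s \in \mathcal{S}_\eps$, so that $v$ is connected to $\mathcal{S}_\eps$ via a walk (hence a path) of length at most $s$.

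For the inductive step, suppose $v_0, v_1, \ldots, v_i$ have been chosen, and set $\eta_j = \sigma(v_j) - \sigma(v) M^j$. I maintain the invariant $\norm[1]{\eta_j} \leq j k \sqrt{\frac{\ln(6k)}{2t}}$. Since $v \in \Sigma_\nu$ gives $\inner{\sigma(v)M^i}{L} \geq \nu$, the invariant yields
\[
\inner{\sigma(v_i)}{L} \geq \nu - \norm[1]{\eta_i} \cdot \norm{L} = \nu - \bigOh{1/\sqrt{t}}.
\]
Thus, letting $d_i = e^{\inner{\sigma(v_i)}{L}}$, we have $d_i^t \geq e^{\nu t - \bigOh{\sqrt{t}}}$. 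Corollary \ref{C:tight} applied to $v_i$ then ensures that with probability at least $1 - e^{-d_i^t/12}$, the vertex $v_i$ has at least $d_i^t / 3$ neighbors $u$ in $G$ satisfying $\norm{\sigma(u) - \sigma(v_i) M} \leq \sqrt{\ln(6k)/(2t)}$; take any such neighbor as $v_{i+1}$. The invariant is preserved because
\[
\norm[1]{\eta_{i+1}} \leq \norm[1]{\sigma(v_{i+1}) - \sigma(v_i)M} + \norm[1]{\eta_i M} \leq k\sqrt{\frac{\ln(6k)}{2t}} + \norm[1]{\eta_i},
\]
where I used $\norm[1]{\sigma(v_{i+1}) - \sigma(v_i)M} \leq k\norm{\sigma(v_{i+1}) - \sigma(v_i)M}$ together with the fact that $M$ is row-stochastic, so it does not increase the $\ell_1$ norm of any signed row vector.

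To complete the argument, $\norm{\eta_s} \leq \norm[1]{\eta_s} \leq sk\sqrt{\ln(6k)/(2t)} = \bigOh{1/\sqrt{t}}$, and combining this with $\paren{\sigma(v)M^s}_i \geq (1-\eps/2)\pi_i$ and the constant lower bound $\pi_i \geq c_1/\vol{W}$, for $t$ sufficiently large every coordinate of $\sigma(v_s)$ satisfies $\sigma(v_s)_i \geq (1-\eps)\pi_i$, i.e., $v_s \in \mathcal{S}_\eps$. Taking a union bound over the $s$ iterative steps gives a total failure probability of at most $s e^{-d_i^t/12} \leq s e^{-e^{\nu t - \bigTheta{\sqrt{t}}}}$, matching the bound claimed in the theorem.

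The principal technical obstacle is to control the accumulated drift $\eta_i$ while simultaneously certifying that each $v_i$ has exponentially many (in $t$) neighbors with signature close to $\sigma(v_i)M$. The key observation that makes this work is that a row-stochastic matrix does not increase the $\ell_1$ norm of any signed row vector, so the per-step drift provided by Corollary \ref{C:tight} accumulates only additively rather than being amplified at each application of $M$. Because $s$ depends only on $P$ and $\eps$ and not on $t$, this additive accumulation stays $\bigOh{1/\sqrt{t}}$ over the entire construction, which is both small enough to keep $\inner{\sigma(v_i)}{L}$ within $\lilOh{1}$ of $\nu$ and small enough to place $v_s$ in $\mathcal{S}_\eps$ given the constant lower bound on $\pi_i$.
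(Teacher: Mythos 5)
Your proposal is correct and follows essentially the same route as the paper: build a length-$s$ walk whose signatures track $\sigma(v)M^{j}$ within additive error $\bigOh{1/\sqrt{t}}$ using Corollary \ref{C:tight} at each step, union bound the $s$ failure probabilities, and use Theorem \ref{TVDist} with the given choice of $s$ (plus the constant lower bound on $\pi_i$) to place $v_s$ in $\mathcal{S}_{\epsilon}$. The only difference is presentational: you carry the accumulated drift $\sigma(v_j)-\sigma(v)M^{j}$ as an explicit induction invariant with the row-stochasticity argument spelled out, whereas the paper bounds it by summing the per-step errors $\eta_i M^{j-i}$ after the fact.
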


\begin{proof}
Let $v \in \Sigma_{\nu}$.  Define $v_0 = v$ and for each $1 \leq i
\leq s$, let $v_i$ be a neighbor of $v_{i-1}$ such that
$\norm[]{\sigma(v_i) - \sigma(v_{i-1})M} \leq \sqrt{\frac{\ln(6k)}{2t}}$ (if such a neighbor exists). For $1 \leq i \leq
s$ define $\eta_i = \sigma(v_i) - \sigma(v_{i-1}M)$.  Now, we note
that if such a sequence exists, then 
\[ 
\norm[]{\sigma(v_j) - \sigma(v)M^j} \leq \norm[]{\sum_{i=
    1}^j \eta_iM^{j-i}} 
\leq \sum_{i=1}^j \norm{\eta_iM^{j-i}} 
\leq \sum_{i=1}^j \norm[1]{\eta_i} 
\leq \sum_{i=1}^j k \sqrt{\frac{\ln(6k)}{2t}} 
= jk \sqrt{\frac{\ln(6k)}{2t}}, \] 

and further \[\inner{v_j}{L} \geq \inner{v_0}{L} -
jk \sqrt{\frac{\ln(6k)}{2t}}\norm[1]{L} \geq \nu - jk
\sqrt{\frac{\ln(6k)}{2t}}\norm[1]{L}.\]
Thus, since $s$ is a fixed constant, we have that by Corollary
\ref{C:tight} for sufficiently large $t$ such a sequence fails to
exist with probability at most 
 \[s e^{ - \frac{ \paren{e^{\nu -
        sk\sqrt{\frac{\ln(6k)}{2t}}\norm[1]{L}}}^t}{12}} = se^{- \frac{e^{\nu t
      - \bigTheta{\sqrt{t}}}}{12}} =  se^{-e^{\nu t - \bigTheta{\sqrt{t}}}}\]
It now suffices to show that $v_s \in \mathcal{S}_{\epsilon}$. 

By the choice of $s$ and Theorem \ref{TVDist}, we know that \[
\abs{\frac{\paren{\sigma(v) M^s}_i - \pi_i}{\pi_i}} \leq \frac{\epsilon}{2},\]
and thus $\paren{\sigma(v)M^s}_i \geq (1-\frac{\epsilon}{2})\pi_i.$  But then
as $\abs{(v_s)_i - \paren{\sigma(v)M^s}_i} \leq sk\sqrt{\frac{\ln(6k)}{2t}}$ we
have that for sufficiently large $t$, $v_s \in \mathcal{S}_{\epsilon}$.     
\end{proof}

\section{Small Components}

We now turn to the case that the stochastic Kronecker graph has only small components, that is, the largest component is of size at most $o(n)=o(k^t)$. These correspond to items (\ref{M:disc_bip}) and
(\ref{M:small}) in Theorem \ref{T:Master}.  The first of these result
follows from standard results on the component sizes of
(non-stochastic) Kronecker graphs which we include in the following
lemma for completeness.

\begin{lemma}\label{T:smallcomps}
If $H$ is a disconnected or bipartite graph on $k$ vertices, then the largest
component of $H^{\otimes t}$ has size $\bigOh{(k-1)^t}$.  
\end{lemma}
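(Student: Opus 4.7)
The plan is to split on which of the two hypotheses holds. The disconnected case follows from the coordinatewise nature of the tensor product, while the connected-bipartite case follows from a parity invariant; the case in which both hypotheses hold falls under the disconnected analysis.

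\emph{Disconnected case.} Write $C_1, \ldots, C_m$ for the components of $H$, with $m \geq 2$, so $|C_i| \leq k-1$ for each $i$; write $C(u)$ for the component of $u \in V(H)$. Any edge of $H^{\otimes t}$ between $(u_1,\ldots,u_t)$ and $(v_1,\ldots,v_t)$ forces $u_i \sim_H v_i$ in every coordinate, hence $v_i \in C(u_i)$. Iterating along walks, the $H^{\otimes t}$-component of $(u_1,\ldots,u_t)$ is contained in $C(u_1) \times \cdots \times C(u_t)$, whose cardinality is at most $(k-1)^t$.

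\emph{Connected bipartite case.} Let $(A,B)$ be the bipartition, with $a = |A|$, $b = |B|$, $a + b = k$, and $a, b \geq 1$. Define $\phi \colon V(H^{\otimes t}) \to \mathbb{F}_2^t$ by $\phi(u_1,\ldots,u_t)_i = \chi_A(u_i)$. Every edge of $H$ crosses the bipartition, so every edge of $H^{\otimes t}$ changes $\phi$ by the all-ones vector $\one$; hence $\phi$ is constant modulo $\one$ along walks, and each component of $H^{\otimes t}$ is contained in a set of the form $\phi^{-1}(x) \cup \phi^{-1}(x+\one)$ for some $x \in \mathbb{F}_2^t$. If $x$ has Hamming weight $j$, this union has $a^j b^{t-j} + a^{t-j} b^j$ elements, maximized at $j \in \{0,t\}$ with value $a^t + b^t \leq 2\max(a,b)^t \leq 2(k-1)^t$.

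I do not anticipate any substantial obstacle: both arguments are essentially immediate once the correct invariant is chosen. The one subtlety is in the bipartite case, where the parity $\phi$ itself is not walk-invariant --- only its class modulo $\one$ is --- and accounting for both cosets is what keeps the bound tight, as confirmed by $H = K_2$, where $H^{\otimes t}$ really does split into two components of size $2 = 2(k-1)^t$.
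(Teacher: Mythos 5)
Your proof is correct and follows essentially the same route as the paper's: in the disconnected case both arguments confine a component to the product of the coordinatewise components, and in the bipartite case both decompose components according to the pattern of coordinates lying in $A$ versus $B$ and maximize $a^j b^{t-j}+a^{t-j}b^j$ at $j\in\{0,t\}$ (the paper phrases this via the bipartition of each component and gets $(k-1)^t+1$, while your $\mathbb{F}_2^t$-invariant modulo $\one$ gives the same count packaged as $a^t+b^t\leq 2(k-1)^t$). No gaps to report.
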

\begin{proof}
First, suppose $H$ is not connected.  Let $v=(v_1, v_2, \dots, v_t)$
be a vertex in $H^{\otimes t}$.  Now for any neighbor $u = (u_1,u_2,
\ldots, u_t)$ of $v$ each coordinate $u_i$ must be adjacent to $v_i$
in $H$ and hence in the same component as $v_i$.  Thus, the size of
the component containing $v$ is at most the product of the sizes of
the components in $H$ of the vertices $v_i$.  Since $H$ is
disconnected the largest component in $H$ has size at most $k-1$ and
thus the largest component in $H^{\otimes t}$ has size at most $(k-1)^t$.

Now, suppose $H$ is a connected bipartite graph with bipartition $(A,B)$ and again
consider a vertex $v = (v_1,v_2,\ldots,v_t)$ and a neighbor $u$ of $v$, with $u
= (u_1,u_2,\ldots, u_t)$. Now since $v_i$ and $u_i$ are adjacent in
$H$, they are on different sides of the bipartition $(A,B)$.  Thus the
component containing $v$ and $u$ is bipartite with $u$ and $v$ on
different sides of the bipartition.  Furthermore, the side of the
bipartition containing $v$ has $\size{A}^{\size{\set{i : v_i \in
    A}}}\size{B}^{\size{\set{j : v_j \in B}}}$ vertices.  Thus for all $0 \leq i \leq
t$ there are $\binom{t}{i}$ components of $H^{\otimes t}$ of size  $\size{A}^i\size{B}^{t-i} +
\size{A}^{t-i}\size{B}^i$.  It is worth noting that this size is
symmetric and so that components counted for a given $i$ are also
counted for $t - i$.  Now maximizing $\size{A}^i\size{B}^{t-i} +
\size{A}^{t-i}\size{B}^i$ over the choice of $i$, we have the largest
component occurs where either $i=0$ or $i=t$. As $\size{B}=k-\size{A}$, we maximize with respect to $\size{A}$ to obtain that the largest of component of $H^{\otimes t}$ has size at most
$(k-1)^t+1$ for $k>1$. 
\end{proof}

This lemma resolves item (\ref{M:disc_bip}) in Theorem
\ref{T:Master} as it implies that the underlying graph for
$P^{\otimes t}$ is disconnected with small component sizes. 

\begin{theorem}\label{T:small}
Let $G$ be a \skg\ generated by $P \in [0,1]^{k \times k}$ with column sums $c_1 \leq
\cdots \leq c_k$.  If $W$ is connected, non-bipartite, and $\prod_i c_i <
1$,
then there exists some $0 < \delta < 1$ such that 
with probability at least $1 - e^{-\frac{n^{\delta}}{3}}$ there are at least $n -
\bigOh{n^{\delta}}$ isolated vertices in $G$.
\end{theorem}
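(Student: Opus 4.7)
The assumption $\prod_i c_i < 1$ translates into $\inner{\sigma_{\mathrm{unif}}}{L} = \frac{1}{k}\sum_i \ln(c_i) < 0$ for the uniform signature $\sigma_{\mathrm{unif}}=(1/k,\ldots,1/k)$. Since the expected degree of a vertex $v$ is $e^{t\inner{\sigma(v)}{L}}$, vertices whose signature lies near uniform have expected degree decaying exponentially in $t$. The plan is to partition $V(G) = \mathcal{B} \cup \mathcal{G}$, where $\mathcal{B} = \set{v : \inner{\sigma(v)}{L} > -\eta}$ is a small exceptional set and $\mathcal{G}$ consists of low-expected-degree vertices, then bound $\size{\mathcal{B}}$ by a direct counting argument and show that almost all vertices in $\mathcal{G}$ are isolated via concentration. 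I will fix $\eta$ with $0 < \eta < -\frac{1}{k}\sum_i \ln(c_i)$, so that the uniform signature lies strictly outside the closure of $\mathcal{B}$.

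To bound $\size{\mathcal{B}}$, I count signatures. The number of vertices with a given signature $\sigma$ is a multinomial coefficient bounded above by $e^{tH(\sigma)}$, where $H(\sigma) = -\sum_i \sigma_i \ln(\sigma_i)$, and the number of distinct signatures is polynomial in $t$. Hence
\[ \size{\mathcal{B}} \leq (t+1)^{k-1} \exp\paren{t \max_{\sigma \in \overline{\mathcal{B}}} H(\sigma)}, \qquad \overline{\mathcal{B}} = \set{\sigma \in \mathcal{S} : \inner{\sigma}{L} \geq -\eta}. \]
Since $H$ attains its unique maximum $\ln(k)$ at the uniform signature which, by the choice of $\eta$, lies outside the compact set $\overline{\mathcal{B}}$, continuity forces $\max_{\overline{\mathcal{B}}} H = \alpha\ln(k)$ for some $\alpha < 1$; hence $\size{\mathcal{B}} = \bigOh{n^{\alpha'}}$ for any fixed $\alpha' \in (\alpha, 1)$. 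This strict-inequality step is precisely where the hypothesis $\prod_i c_i < 1$ is essential.

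For $\mathcal{G}$, let $X_\mathcal{G}$ be the number of non-isolated vertices in $\mathcal{G}$ and $Z = \sum_{v \in \mathcal{G}} \deg_G(v) \geq X_\mathcal{G}$. Writing $Z \leq 2Y$, where $Y$ is the number of edges of $G$ with at least one endpoint in $\mathcal{G}$, makes $Y$ a sum of independent Bernoulli variables (since edges of $G$ are independent), to which the standard one-sided Chernoff bound $\prob{Y \geq K} \leq \paren{e\expect{Y}/K}^K$ applies. Since $\expect{Y} \leq \expect{Z} \leq \size{\mathcal{G}} e^{-\eta t} \leq n^{1-\eta/\ln(k)}$, for any fixed $\delta_0 > 1 - \eta/\ln(k)$ we obtain $\prob{X_\mathcal{G} \geq n^{\delta_0}} \leq e^{-\bigTheta{n^{\delta_0}\ln(n)}}$. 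Setting $\delta = \max(\alpha', \delta_0) < 1$, we conclude that with probability at least $1 - e^{-n^\delta/3}$ the number of non-isolated vertices is at most $\size{\mathcal{B}} + X_\mathcal{G} = \bigOh{n^\delta}$, giving at least $n - \bigOh{n^\delta}$ isolated vertices. The main obstacle is the entropy bound in the second step: securing the strict inequality $\max_{\overline{\mathcal{B}}} H < \ln(k)$ from the hypothesis $\prod_i c_i < 1$. Once this gap is in hand, the remaining concentration step is routine.
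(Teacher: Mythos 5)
Your proposal is correct, and it shares the paper's overall decomposition: split the vertices by a threshold on the expected degree, show deterministically that few vertices exceed the threshold, and then use the first moment plus a Chernoff bound on the number of edges incident to the low-expected-degree vertices to conclude that almost all of them are isolated. Where you differ is in the counting step. The paper bounds the number of "large" vertices by applying Hoeffding's inequality to $X=\sum_i X_i$ with $X_i$ uniform on $\set{\ln(c_1),\dots,\ln(c_k)}$ (the log expected degree of a uniformly random vertex), which forces a separate case when $c_1=c_k$ since the Hoeffding denominator $(\ln(c_k)-\ln(c_1))^2$ vanishes there; you instead use a method-of-types entropy count, $\size{\mathcal{B}}\leq (t+1)^{k-1}e^{t\max_{\overline{\mathcal{B}}}H}$ with $\max_{\overline{\mathcal{B}}}H<\ln(k)$ by compactness, continuity, and the fact that the unique entropy maximizer (the uniform signature) lies outside $\overline{\mathcal{B}}$ when $\prod_i c_i<1$. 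This handles both of the paper's cases uniformly (when $c_1=c_k$ your set $\mathcal{B}$ is simply empty) and is, if anything, slightly sharper; the strict-inequality step you flag as the "main obstacle" is in fact already closed by the compactness argument you give. One bookkeeping remark: with $\delta=\max(\alpha',\delta_0)$, the failure probability $e^{-\bigTheta{n^{\delta_0}\ln n}}$ you quote need not be below $e^{-n^{\delta}/3}$ when $\alpha'>\delta_0$; simply run the same Chernoff bound at threshold $n^{\delta}$ instead of $n^{\delta_0}$ (the bound only improves, since $\delta\geq\delta_0>1-\eta/\ln k$), and the stated probability guarantee follows. This is a trivial re-indexing, not a gap.
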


\begin{proof}We consider two cases, according to whether $c_k=c_1$ or $c_k\neq c_1$.

If $c_k\neq c_1$, then as $\prod_i c_i < 1$ we may set $0<\eps =\frac{-1}{k}\sum_i \ln(c_i)$, so that $\sum_i \ln(c_i) = -\epsilon k <
0$. Let $\alpha$ be a solution to \[ \alpha = \frac{2(\epsilon -
  \alpha)^2}{\paren{\ln(c_k) - \ln(c_1)}^2} \] in the interval
$[0,\epsilon]$.  Such an $\alpha$ exists as $\alpha$ and $ \frac{2(\epsilon -
  \alpha)^2}{\paren{\ln(c_k) - \ln(c_1)}^2}$ are continuous functions,
$0 < \frac{2\epsilon^2}{\paren{\ln(c_k) - \ln(c_1)}^2}$, and $\epsilon
> 0$.  Let $\delta = 1 - \frac{\alpha}{\ln(k)}$.  Let $X= X_1 + \cdots
+ X_t$
where each $X_i$ takes values independently uniformly from $\set{\ln(c_1), \ldots,
  \ln(c_k)}$.  Note that $X$ can be thought of as the natural
logarithm of the expected degree of a vertex of $G$ chosen uniformly
at random.  Now by Hoeffding bounds we have that \[\prob{X \geq -\alpha
  t}  = \prob{X + \epsilon t \geq (\epsilon - \alpha)t} \leq
e^{-\frac{2(\epsilon - \alpha)^2}{\paren{\ln(c_k) - \ln(c_1)}^2}t} =
e^{-\alpha t}.\]  Thus there are at most $k^t e^{-\alpha t} = n^{\delta}$ vertices
of $G$ with expected degree larger than $e^{-\alpha t}$.  The sum of
the expected degrees of vertices with expected degree smaller than
$e^{-\alpha t}$ is at most $k^te^{-\alpha t} = n^{\delta}$.  Thus by
Chernoff bounds with probability at least
$1-e^{-\frac{n^{\delta}}{3}}$ there are at most $2n^{\delta}$ edges
incident to vertices with expected degree at most $e^{-\alpha t}$.
Combining this with the vertices with expected degree at least
$e^{-\alpha t}$ we have that there are at most $3n^{\delta}$
non-isolated vertices in $G$.      

\quad

For the second case, if $c_k=c_1$, then we note that $c_1=c_2=\dots=c_k$. As $\prod c_i<1$, we have that $c_1<1$, and the expected degree of every vertex in $G$ is $c_1^t$. Note then that by linearity of expectation, we have that the expected number of edges in $G$ is $\frac{1}{2}n c_1^t = \frac{1}{2}n^{1+\frac{\ln c_1}{\ln k}}$. As $\ln c_1<0$, we have that the expected number of edges in $G$ is at most $n^\delta$ for some $0<\delta<1$. By Chernoff bounds, then, the number of edges in $G$ is at most $\frac{3}{2}n^\delta$ with probability at least $2\exp\paren{-n^\delta/8}=\lilOh{1}$. But then the number of nonisolated vertices in $G$ is at most $3n^\delta$, and the result follows.

\end{proof}

The preceding theorem resolves item (\ref{M:small}) in Theorem \ref{T:Master}. 

\section{Giant Components}

We now turn our attention to proving item (\ref{M:giant}) in Theorem \ref{T:Master}. To prove this result, we will use the structure outlined in Section \ref{Structural}, and in particular, Theorems \ref{T:Sepsilon} and \ref{T:SnakeOne} regarding the existence of a connected core of vertices and the vertices that can be connected by a path to $S_\eps$. In order to apply these theorems, however, we must verify that the conditions are met. We thus begin with several additional lemmas addressing the case that $\prod_i c_i>1$.

\begin{lemma}\label{L:c_prod}
Let $0 < c_1 \leq \cdots \leq c_k$ be such that $\prod_i c_i \geq 1$.
Then $\sum_i c_i \ln(c_i) \geq 0$ with equality if and only if the
$c_i$'s are identically 1. 
\end{lemma}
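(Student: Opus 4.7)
The plan is to reduce the inequality to two standard convexity facts and then extract equality from the tangent-line inequality.

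First I would exploit the tangent-line lower bound coming from convexity of $f(x)=x\ln x$. Since $f'(1)=1$ and $f(1)=0$, and since $f$ is strictly convex on $(0,\infty)$, we have
\[ x\ln x \geq x-1 \qquad \text{for all } x>0, \]
with equality if and only if $x=1$. Summing this inequality over $i=1,\dots,k$ gives
\[ \sum_i c_i\ln c_i \;\geq\; \sum_i c_i - k. \]

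Next, I would bound the right-hand side using AM-GM together with the hypothesis. Since $c_i>0$,
\[ \frac{1}{k}\sum_i c_i \;\geq\; \Bigl(\prod_i c_i\Bigr)^{1/k} \;\geq\; 1, \]
where the last step uses $\prod_i c_i\geq 1$. Chaining these two inequalities yields $\sum_i c_i\ln c_i\geq 0$, proving the main inequality.

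For the equality clause, suppose $\sum_i c_i\ln c_i=0$. The two inequalities used above must both be tight. Tightness of the tangent-line bound requires $c_i\ln c_i=c_i-1$ for every $i$, which by strict convexity of $f$ forces $c_i=1$ for all $i$. Conversely, if $c_i=1$ for all $i$, the sum is trivially zero. This is an easy step once the two ingredients are in place, and in fact equality in the tangent-line step alone is enough, so the AM-GM step is only needed for the inequality direction. I do not anticipate any substantive obstacle; the only subtlety is to be careful to invoke \emph{strict} convexity of $x\ln x$ to conclude the equality characterization.
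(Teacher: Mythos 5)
Your proof is correct, and it takes a genuinely different route from the paper's. The paper orders the $c_i$'s and uses an Abel-summation (summation by parts) identity: writing $\delta_j = c_j - c_{j-1} \geq 0$ and $s_j = \sum_{i=j}^k \ln(c_i)$, it expresses $\sum_i c_i\ln(c_i) = \sum_j \delta_j s_j$ and shows each tail sum $s_j$ is nonnegative using monotonicity of $\ln$ together with $\sum_i \ln(c_i)\geq 0$; the equality case is then settled by locating the first index $j$ with $c_j>1$ and observing $\delta_j>0$, $s_j>0$. You instead combine the pointwise tangent-line bound $x\ln x \geq x-1$ (strict for $x\neq 1$) with AM--GM to get $\sum_i c_i\ln c_i \geq \sum_i c_i - k \geq k\bigl(\bigl(\prod_i c_i\bigr)^{1/k}-1\bigr) \geq 0$. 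Your argument is shorter, does not use the ordering of the $c_i$'s at all, and makes the equality characterization immediate from strict convexity (as you note, tightness in the tangent-line step alone forces every $c_i=1$, with AM--GM needed only for the inequality direction); the paper's rearrangement argument is more hands-on but buys nothing extra here, since both approaches also yield strict positivity when $\prod_i c_i>1$, which is how the lemma is actually used later. Both proofs are complete and correct.
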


\begin{proof}
Define $\delta_j = c_j - c_{j-1} \geq 0$, where $c_0$ is defined to be 0 and
define $s_j = \sum_{i=j}^k \ln(c_i)$. As $\sum_i c_i \ln(c_i) =
\sum_i \delta_i s_i$, and all the $\delta_i \geq 0$, it suffices
to show that $s_i \geq 0$ for $i=1, 2, \dots, k$.  We note that since the $c_i$'s are
increasing and $\ln(\cdot)$ is a monotonically increasing function
$0 \leq \sum_i \ln(c_i) \leq \frac{j-1}{k-j+1}s_j + s_j$,
and thus $s_j \geq 0$ for all $j$. 

 We note that if $\prod_i c_i
> 1$, then the previous argument implies that $\sum_i c_i\ln(c_i) > 0$.  
Thus suppose that $\prod_i c_i = 1$ and yet the $c_i$'s are not
identically 1.  As this implies that $c_k > 1$ and $c_1 < 1$, there is
some minimal $j$ such that $c_j > 1$. But then as $c_{j-1} \leq 1$,
$\delta_j > 0$ and $s_j = \sum_{i=j}^k \ln(c_i) \geq (k-j+1) \ln(c_j)
> 0$, we have that $\sum_i c_i \ln(c_i) > 0$, as desired.    
\end{proof}

\begin{lemma}\label{L:uniform}
Let $P$ be a symmetric matrix in $[0,1]^{k \times k}$ with
non-identical column sums
$0 < c_1 \leq \cdots \leq c_k$.  Further suppose that the
associated weighted graph $W$ is connected and non-bipartite.  Let  $f$ be a
strictly monotonically increasing function on $\R^+$ and let $L$ be
the vector $\paren{f(c_1), \ldots, f(c_k)}$.  If $M$ is the transition
matrix for the uniform random walk on $W$, then $\inner{\one M^s}{L} >
\inner{\one}{L}$ for all $s \geq 1$. 
\end{lemma}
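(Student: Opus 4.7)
The plan is to exploit reversibility of $M$ with respect to $\pi$ (where $\pi_i=c_i/\vol{W}$) in order to rewrite $\inner{\one M^s-\one}{L}$ as a manifestly nonnegative quadratic form, and then rule out equality using the hypothesis that the column sums are non-identical. Reversibility extends to all powers of $M$, giving $c_i\paren{M^s}_{ij}=c_j\paren{M^s}_{ji}$, so the matrix $Q_{ij}:=c_i\paren{M^s}_{ij}$ is symmetric. Starting from $\paren{\one M^s}_i=\sum_j\paren{M^s}_{ji}=\sum_j(c_i/c_j)\paren{M^s}_{ij}$ and using $\sum_j\paren{M^s}_{ij}=1$, I obtain
\[
\paren{\one M^s}_i-1=\sum_j\paren{M^s}_{ij}\paren{\frac{c_i}{c_j}-1}=\sum_j Q_{ij}\paren{\frac{1}{c_j}-\frac{1}{c_i}}.
\]
Multiplying by $f(c_i)$, summing on $i$, and symmetrizing via $Q_{ij}=Q_{ji}$, the diagonal terms vanish and I expect to arrive at
\[
\inner{\one M^s-\one}{L}=\tfrac{1}{2}\sum_{i\ne j}Q_{ij}\paren{\frac{1}{c_j}-\frac{1}{c_i}}\paren{f(c_i)-f(c_j)}.
\]

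Every term on the right is nonnegative, because $Q_{ij}\ge 0$ and $f$ is strictly increasing, so the factors $\frac{1}{c_j}-\frac{1}{c_i}$ and $f(c_i)-f(c_j)$ always carry the same sign (both vanish iff $c_i=c_j$). This already yields $\inner{\one M^s}{L}\ge\inner{\one}{L}$; the remaining task, which I expect to be the main obstacle, is to promote this to a strict inequality.

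Equality in the displayed identity would force $Q_{ij}=0$, i.e.\ $\paren{M^s}_{ij}=0$, whenever $c_i\ne c_j$; that is, the $s$-step random walk on $W$ would never move between vertices of different column sums. Feeding this back into $\paren{\one M^s}_i=\sum_j(c_i/c_j)\paren{M^s}_{ij}$ collapses the ratio $c_i/c_j$ to $1$ on the support of row $i$ of $M^s$, giving $\paren{\one M^s}_i=\sum_j\paren{M^s}_{ij}=1$ and hence $\one M^s=\one$. So $\one$ would be a left $1$-eigenvector of $M^s$. But since $W$ is connected and non-bipartite, $M$ is an irreducible aperiodic stochastic matrix whose eigenvalues other than $1$ lie strictly inside $(-1,1)$; hence $M^s$ inherits $1$ as a simple eigenvalue with one-dimensional left eigenspace spanned by $\pi$. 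Thus $\one\propto\pi$, which forces every $c_i$ to equal $\vol{W}/k$, contradicting the assumption that the column sums are non-identical. Therefore the sum is strictly positive for every $s\ge 1$, completing the proof.
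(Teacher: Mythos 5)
Your proof is correct, but it reaches the conclusion by a genuinely different route than the paper, most notably in how strictness is obtained. The paper first proves the case $s=1$ by exactly the kind of symmetrization you perform (writing $\inner{\one M}{L}-\inner{\one}{L}=\sum_{c_i>c_j}P_{ij}\paren{\frac{1}{c_j}-\frac{1}{c_i}}\paren{L_i-L_j}$, strictly positive because connectivity plus non-identical column sums force some $P_{ij}>0$ with $c_i\neq c_j$), and then reduces general $s$ to this case: it writes $M^s=C^{-1}P'$ with $P'=\paren{PC^{-1}}^{s-1}P$ symmetric and with the same column sums, and shows combinatorially (using an odd cycle and parity of walks) that the graph of $P'$ is connected and non-bipartite. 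Your matrix $Q=CM^s$ is precisely this $P'$, so your reversibility-based identity is essentially the paper's $s=1$ computation applied directly to $M^s$; the real divergence is that where the paper certifies strict positivity by exhibiting a positive entry $P'_{ij}$ with $c_i\neq c_j$ via the walk/odd-cycle argument, you instead argue by contradiction: equality would force $\one M^s=\one$, and since $M$ is reversible with real spectrum in $[-1,1]$, connected (so $1$ is simple) and non-bipartite (so $-1$ is excluded), the left $1$-eigenspace of $M^s$ is spanned by $\pi\propto(c_1,\ldots,c_k)$, contradicting non-identical column sums. Both arguments use non-bipartiteness in an essential way — the paper combinatorially, you spectrally — and both are sound; your version handles all $s$ uniformly and avoids the auxiliary-graph connectivity argument, at the cost of invoking Perron--Frobenius/spectral simplicity, while the paper's reduction keeps the positivity completely elementary and explicit.
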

\begin{proof}
We first note that  $M = C^{-1}P$ and consider
\begin{align*}
\inner{\one M}{L} - \inner{\one}{L} &= \inner{\one C^{-1}P}{L} -
\inner{\one}{L} \\ &= \sum_{i=1}^k \sum_{j=1}^k  \frac{P_{ij}}{c_i}L_j - \sum_{j=1}^k L_j \\
&= \sum_{i=1}^k \sum_{j=1}^k  \frac{P_{ij}}{c_i}L_j - \sum_{j=1}^k
\sum_{i=1}^k 
\frac{P_{ij}}{c_j}L_j \\
&= \sum_{i=1}^k \sum_{j=1}^k  \paren{\frac{P_{ij}}{c_i} -
  \frac{P_{ij}}{c_j}} L_j \\
  &= \sum_{c_i > c_j} P_{ij}\paren{\frac{1}{c_j}-\frac{1}{c_i}}\paren{L_i-L_j}\\
\end{align*}
Note that as $f$ is strictly increasing, $L_i - L_j > 0$ and
$\frac{1}{c_j} - \frac{1}{c_i} > 0$ for $c_i > c_j$.  Further, as $W$ is connected, $P_{ij} > 0$ for some $i$ and $j$ with $c_i \neq
c_j$, giving that $\inner{\one M}{L} - \inner{\one}{L} > 0$.  

To complete the proof it would suffice to show that $M^s$ is the
transition probability matrix for the uniform random walk on some
connected, non-bipartite graph with the same degree sequence as $W$.
To that end, fix some $s \geq 2$ and note that $M^s = C^{-1}\paren{PC^{-1}}^{s-1}P$, and
so let $P' = \paren{PC^{-1}}^{s-1}P$.  It is clear that $P'$ is
symmetric and has the desired column sums, thus it suffices to show
that the associated graph $W'$ is connected and non-bipartite.  We
note that $P'_{ij} > 0$ if and only if there is a length $s$ walk
between $i$ and $j$ in $W$.  We note that if $s$ is odd, then the
edges present in $W'$ are a superset of the edges in $W$, and thus
$W'$ is connected and non-bipartite.  

Thus suppose $s$ is even and let
$\mathcal C$ be an odd length cycle in $W$.  Consider the walk in $W'$ formed
by starting at vertex $v$ and traversing the cycle $\mathcal C$ in steps of length $s$.  As $s$ is
even and the length of the cycle is odd, it will take an odd number of steps
in $W'$ to return to the vertex $v$.  Thus, there is a closed walk in
$W'$ of odd length and hence $W'$ is non-bipartite.   We note that
as $s$ is even $W'$ contains self-loops at all vertices and edges
between pairs of vertices that are connected by a walk of length 2.
Thus in order to show that $W'$ is connected it suffices to show that
there is an even length walk between any two vertices in $W$.
For any two distinct vertices $u$ and $v$ in $W$ such a walk can be
constructed by taking a walk from each vertex to the odd cycle $\mathcal C$ and
then traversing $\mathcal C$ in both directions.  As $\mathcal C$ is an odd cycle, these
two traversals will have opposite parity, and thus one of those walks
will have even length.  
\end{proof}

These two Lemmas immediately give part (\ref{M:giant}) of our main theorem, as follows.
\begin{theorem}\label{T:giant}
Let $G$ be a \skg\ generated by a matrix $P \in [0,1]^{k \times k}$
such that $W$ is connected and non-bipartite. If $\prod_i c_i >
1$, then there are constants $s, d > 1$, depending only on $P$, such that
for sufficiently large $t$, $G$ has a giant component with probability
at least $1 - s k^t e^{-\bigTheta{d^t}}$.  
\end{theorem}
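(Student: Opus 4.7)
The plan is to combine Theorems \ref{T:Sepsilon} and \ref{T:SnakeOne}: first establish the connected core on $\mathcal{S}_\epsilon$, and then show that a constant fraction of the vertices of $G$ lie in $\Sigma_\nu$ for some $\nu>0$, so that a union bound of Theorem \ref{T:SnakeOne} over those vertices attaches them all to the core.  Since $\prod_i c_i > 1$, Lemma \ref{L:c_prod} yields $\sum_i c_i\ln(c_i) > 0$, so the hypotheses of Theorem \ref{T:Sepsilon} are met for a suitable $\epsilon > 0$, and the subgraph of $G$ induced by $\mathcal{S}_\epsilon$ is connected with probability at least $1 - e^{-\bigTheta{d_1^t}}$ for some $d_1 > 1$.

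The key step is to exhibit a neighborhood of the uniform signature $\sigma_0 = \frac{1}{k}\one$ that lies inside $\Sigma_\nu$ for some $\nu > 0$.  Set $\nu_0 = \inner{\sigma_0}{L} = \frac{1}{k}\ln\paren{\prod_i c_i} > 0$.  When the $c_i$ are not all equal, Lemma \ref{L:uniform} with $f = \ln$ (after scaling by $\frac{1}{k}$) gives $\inner{\sigma_0 M^s}{L} > \nu_0$ for every $s \geq 1$; when the $c_i$ are all equal, $\sigma_0 = \pi$ is stationary and the same inequality holds with equality. In either case $\inner{\sigma_0 M^s}{L} \geq \nu_0$ for all $s \geq 0$.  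To extend this to a neighborhood, exploit that $M$ is a contraction on row vectors in $\ell_1$: for any $\sigma$ with $\norm[1]{\sigma - \sigma_0} \leq \eta$, $\norm[1]{(\sigma - \sigma_0)M^s} \leq \eta$ uniformly in $s \geq 0$, and hence
\[
\inner{\sigma M^s}{L} \;\geq\; \nu_0 - \eta\norm[\infty]{L}.
\]
Choosing $\eta = \nu_0/\paren{2\norm[\infty]{L}}$ yields $\sigma \in \Sigma_\nu$ with $\nu = \nu_0/2 > 0$.

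The signature of a uniformly random vertex concentrates at $\sigma_0$: each coordinate is a normalized binomial, and Hoeffding together with a union bound over the $k$ coordinates shows that at least $\paren{1 - 2k e^{-\bigTheta{t}}}k^t$ vertices of $G$ satisfy $\norm[1]{\sigma(v) - \sigma_0} \leq \eta$ and therefore lie in $\Sigma_\nu$.  Applying Theorem \ref{T:SnakeOne} to each such vertex and union bounding over at most $k^t$ vertices gives a failure probability of at most $sk^t e^{-e^{\nu t - \bigTheta{\sqrt{t}}}} \leq sk^t e^{-\bigTheta{d_2^t}}$ for any fixed $1 < d_2 < e^{\nu}$ and $t$ sufficiently large.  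Combining with the event from Theorem \ref{T:Sepsilon} and setting $d = \min\set{d_1,d_2} > 1$, we conclude that with probability at least $1 - sk^t e^{-\bigTheta{d^t}}$ the graph $G$ contains a single component of size $\paren{1-\lilOh{1}}n$, which is the desired giant component.

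The main obstacle is the uniform-in-$s$ lower bound on $\inner{\sigma M^s}{L}$ in a neighborhood of $\sigma_0$: a priori the orbit $\sigma M^s$ could dip before approaching $\pi$, so one cannot rely solely on large-$s$ convergence of the Markov chain.  Lemma \ref{L:uniform} controls small $s$ by preventing any drop below $\nu_0$ along the trajectory starting at $\sigma_0$, while the $\ell_1$-contractivity of $M$ on row vectors propagates the perturbation bound uniformly over all $s$ and so produces a single $\eta$ that works simultaneously for every step of every vertex.
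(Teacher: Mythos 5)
Your proof is correct and follows essentially the same route as the paper: Lemma \ref{L:c_prod} plus Theorem \ref{T:Sepsilon} for the connected core, then Lemma \ref{L:uniform} together with the $\ell_1$-contractivity of $M$ on row vectors to place a neighborhood of the uniform signature inside $\Sigma_\nu$, and finally Theorem \ref{T:SnakeOne} with a union bound over vertices. The only differences are minor: the paper works with a $\frac{c}{\sqrt{t}}$-radius neighborhood (hence only a constant fraction of vertices) and disposes of the all-$c_i$-equal case separately via Theorem \ref{T:IndepSpec}, whereas your constant-radius neighborhood and the stationarity observation handle both cases uniformly and even give a component of size $(1-\lilOh{1})n$.
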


\begin{proof}
First, if $c_1=c_2=\dots=c_k$, we note that the minimum degree in $W^{\otimes t}$ is at least exponential in $t$, and hence by Theorem \ref{T:IndepSpec} together with the spectral properties of Kronecker products used in Theorem \ref{T:Sepsilon}, $G$ is connected with probability at least $1-e^{-\bigTheta{d^t}}$, and the result follows immediately.

If not, then by Lemma \ref{L:c_prod}, we have that $\sum_i c_i\ln(c_i) > 0$. Fix 
\[ 0 < \epsilon = \frac{\sum_i c_i \ln(c_i)}{2\sum_i c_i
  \ln(c_i) - 2\ln(c_1)\vol{W}} <  \frac{\sum_i c_i \ln(c_i)}{\sum_i c_i
  \ln(c_i) - \ln(c_1)\vol{W}}.\]
By Theorem \ref{T:Sepsilon}, there is some constant $d_1 > 1$ which
depends only on $P$ such that $\mathcal{S}_{\epsilon}$ is connected with
probability at least $1 - e^{-\bigTheta{d_1^t}}$.

Fix some positive constant $c$.  Let $v$ be an arbitrary vertex such
that $\norm[]{\sigma(v) - \frac{1}{k}\one} \leq
\frac{c}{\sqrt{t}}$ and let $\eta_v = \sigma(v) - \frac{1}{k}\one$.
Noting that $\inner{\one}{L} = \ln\paren{\prod_i c_i} > \ln(1) =
0$, we have that for sufficiently large $t$ and all $s \geq 0$, 

\begin{align*}
\inner{\sigma(v)M^s}{L} &= \inner{\paren{\frac{1}{k}\one + \eta_v} M^s}{L} \\
&= \frac{1}{k}\inner{\one M^s}{L} + \inner{\eta_v M^s}{L} \\
&\geq \frac{1}{k}\inner{\one}{L} - \norm[1]{\eta_v}\norm[\infty]{L} \\
&\geq \frac{1}{k}\inner{\one}{L} - \frac{k c
  \norm[\infty]{L}}{\sqrt{t}} \\
&> \frac{1}{2k}\inner{\one}{L},
\end{align*}
where the first inequality follows from Lemma \ref{L:uniform}.  Let
$d_2 = e^{\frac{1}{2k}\inner{\one}{L}}$ and note that this implies that 
$v \in \Sigma_{\frac{1}{2k}\inner{\one}{L}}$ and so by Theorem
\ref{T:SnakeOne} there is a constant $s$ such that  with probability
at least $1 - s e^{-(\frac{1}{12}-\lilOh{1}) d_2^t}$ the vertex $v$ is
connected to $\mathcal{S}_{\epsilon}$ by a path of length at most $s$.
Observing that a constant fraction of the vertices have the desired
signature by Chernoff bounds completes the proof.
\end{proof}

A slight modification of this argument gives part
(\ref{M:giant_bound}) of the main theorem.
\begin{theorem}\label{T:giant_bound}
Let $G$ be a \skg\ generated by a matrix $P \in [0,1]^{k \times k}$
such that $W$ is connected and non-bipartite. If $\prod_i c_i =
1$ such that the $c_i$'s are not all equal, then there are constants $s, d > 1$, depending only on $P$, such that
for sufficiently large $t$, $G$ has a giant component with probability
at least $1 -  e^{-\bigTheta{d^t}}$.  
\end{theorem}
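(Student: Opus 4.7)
The plan is to adapt the proof of Theorem~\ref{T:giant}, accounting for the fact that with $\prod_i c_i = 1$ one has $\inner{\one}{L} = 0$, so the direct argument placing every vertex with signature near $\frac{1}{k}\one$ into $\Sigma_\nu$ for a fixed positive $\nu$ breaks down. The essential substitute is that by Lemma~\ref{L:uniform} (which applies because the $c_i$'s are not all equal), $\inner{\frac{1}{k}\one M^s}{L} > 0$ for every $s \geq 1$, and since $\frac{1}{k}\one M^s \to \pi$ as $s \to \infty$ with $\inner{\pi}{L} = \frac{1}{\vol{W}} \sum_i c_i \ln c_i > 0$ (by Lemma~\ref{L:c_prod}), the constant $\nu_0 := \inf_{s \geq 1} \inner{\frac{1}{k}\one M^s}{L}$ is strictly positive. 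In other words, after a single Markov step from the uniform distribution, the resulting distribution and all its further Markov iterates have expected log-degree bounded below by the fixed positive constant $\nu_0$.

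With $\nu_0$ in hand I would carry out the argument as follows. First, Lemma~\ref{L:c_prod} gives $\sum c_i \ln c_i > 0$, so Theorem~\ref{T:Sepsilon} supplies a connected core $\mathcal{S}_\epsilon$ with failure probability $e^{-\bigTheta{d_1^t}}$ for a suitable $\epsilon > 0$. Next, fix a small $c > 0$ and let $V^*$ denote the set of vertices $v$ satisfying both $\norm[\infty]{\sigma(v) - \frac{1}{k}\one} \leq c/\sqrt{t}$ and $\inner{\sigma(v)}{L} \geq c/\sqrt{t}$. Because $L$ is not a scalar multiple of $\one$ (the $c_i$'s being non-identical), standard CLT estimates for multinomial signatures give $|V^*| \geq \beta k^t$ for some constant $\beta > 0$. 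For each $v \in V^*$ the expected degree is at least $e^{c\sqrt{t}}$, so Corollary~\ref{C:tight} produces, with doubly-exponentially small failure probability, a neighbor $u$ with $\norm[\infty]{\sigma(u) - \sigma(v)M} \leq \sqrt{\ln(6k)/(2t)}$. A short computation using $\norm[1]{\tau M^j} \leq \norm[1]{\tau}$ then shows $\sigma(u) M^j$ lies within $O(1/\sqrt{t})$ in $\ell_1$ of $\frac{1}{k}\one M^{j+1}$, so $\inner{\sigma(u) M^j}{L} \geq \nu_0/2$ for every $j \geq 0$ once $t$ is sufficiently large, placing $u$ in $\Sigma_{\nu_0/2}$. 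Applying Theorem~\ref{T:SnakeOne} to $u$ yields a path of bounded length from $u$ (and hence from $v$) to $\mathcal{S}_\epsilon$. A union bound over $V^*$ combined with the connectedness of $\mathcal{S}_\epsilon$ puts at least $\beta k^t$ vertices in one component with probability $1 - e^{-\bigTheta{d^t}}$, yielding the giant component.

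The principal obstacle is the need for the preliminary Markov step before Theorem~\ref{T:SnakeOne} can be invoked. Since $\inner{\sigma(v)}{L}$ is only of order $1/\sqrt{t}$ for typical $v$, one cannot directly take $\nu$ to be a fixed positive constant in $\Sigma_\nu$ as in Theorem~\ref{T:giant}; one must first pass from $v$ to a well-chosen neighbor $u$ whose signature is close to $\frac{1}{k}\one M$, for which Lemma~\ref{L:uniform} and Markov-chain convergence provide the needed uniform lower bound. Verifying that a positive fraction of vertices permit this initial step is precisely where the hypothesis that the $c_i$'s are not all equal is essential: it is the condition ensuring $L$ is not parallel to $\one$, which in turn supplies a non-degenerate direction along which $\inner{\sigma(v)}{L}$ can be made positive of order $1/\sqrt{t}$ for a constant fraction of vertices.
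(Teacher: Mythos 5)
Your overall strategy coincides with the paper's: build the connected core via Theorem \ref{T:Sepsilon} (after Lemma \ref{L:c_prod} gives $\sum_i c_i\ln c_i>0$), extract a uniform constant $\nu>0$ with $\inner{\frac{1}{k}\one M^j}{L}\geq\nu$ for all $j\geq 1$ from Lemma \ref{L:uniform} together with convergence of $\frac{1}{k}\one M^j$ to $\pi$ (the paper does exactly this, splitting at a fixed mixing time rather than taking an infimum, which is equivalent), and then route a constant fraction of vertices into $\Sigma_{\nicefrac{\nu}{2}}$ by one preliminary step to a neighbor whose signature is near $\frac{1}{k}\one M$, after which Theorem \ref{T:SnakeOne} finishes. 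Your correct identification that the preliminary step is the crux (since $\inner{\sigma(v)}{L}=O(1/\sqrt t)$ for typical $v$) matches the paper. Where you differ is how that step is executed: the paper takes the set $V'$ of near-uniform vertices with expected degree merely at least $1$, shows each has a suitable neighbor with \emph{constant} probability, and applies Chernoff to the $\bigTheta{n}$ essentially independent events; you instead restrict to vertices with $\inner{\sigma(v)}{L}\geq c/\sqrt t$, so that Corollary \ref{C:tight} applies with $d^t\geq e^{c\sqrt t}$, and then union bound.

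Two concrete issues with your variant. First, using the same constant $c$ in both conditions defining $V^*$ can make $V^*$ empty: writing $x=\sqrt t\,(\sigma(v)-\frac{1}{k}\one)$, the constraints force $\sum_i x_i=0$ and $\norm[\infty]{x}\leq c$, hence $\inner{x}{L}\leq \kappa c$ where $\kappa=\max\set{\inner{x}{L} \mid \sum_i x_i=0,\ \norm[\infty]{x}\leq 1}$, and $\kappa<1$ whenever the $c_i$ are close enough to $1$ (e.g.\ $k=2$, $c_2=1.1$). You need the half-space threshold to be a separate constant $c'<\kappa c$; with that repair your CLT argument for $\size{V^*}\geq\beta k^t$ is fine, and the subsequent steps ($\ell_1$-contraction under the row-stochastic $M$, membership of the neighbor in $\Sigma_{\nicefrac{\nu}{2}}$, Theorem \ref{T:SnakeOne}) go through. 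Second, your union bound over $V^*$ only gives failure probability about $k^t e^{-e^{c'\sqrt t}/12}=e^{-e^{\bigTheta{\sqrt t}}}$, which is \emph{larger} than the $e^{-\bigTheta{d^t}}$ (equivalently $e^{-\bigTheta{n^{\alpha}}}$) claimed in the theorem; since the per-vertex bound from Corollary \ref{C:tight} is itself only $e^{-e^{\bigTheta{\sqrt t}}}$ in this regime, no union bound over all of $V^*$ can reach the stated rate. To recover the theorem's probability you should demand only that a constant fraction of $V^*$ succeed and prove concentration of the number of successes (exactly the role Chernoff plays in the paper's argument). Both issues are repairable without new ideas, so your proposal is essentially a correct variant of the paper's proof, but as written it establishes the giant component only with probability $1-e^{-e^{\bigTheta{\sqrt t}}}$ rather than $1-e^{-\bigTheta{d^t}}$.
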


\begin{proof}
Since the $c_i$'s are not all equal, we have that $\sum_i c_i\ln(c_i) >
0$ by Lemma \ref{L:c_prod}.  Fix $\eps>0$ satisfying the hypotheses of Theorem \ref{T:Sepsilon}; we shall further restrict $\eps$ as needed below. Then by Theorem \ref{T:Sepsilon} there is some constant $d_1
> 1$ such that $\mathcal{S}_{\epsilon}$ is connected with probability
at least $1 - e^{-\bigTheta{d_1^t}}$.  

Let $s = \ceil{\frac{1}{\lambda}\ln\paren{\frac{2\vol{W}}{\epsilon
      c_1}}}$ and note that by Theorem \ref{TVDist}, $\frac{1}{k}\one
  M^{j} \in \mathcal{S}_{\nicefrac{\epsilon}{2}}$ for all $j \geq s$.

Note that for all $i=1, 2, \dots, k$, if $\sigma\in \mathcal{S}_{\nicefrac{\eps}{2}}$, then we have \[\sigma_i=1-\sum_{j\neq i}\sigma_j\leq 1-\sum_{j\neq i}\left(1-\frac{\eps}{2}\right)\pi_j = 1-\left(1-\frac{\eps}{2}\right)(1-\pi_i) = \left(1-\frac{\eps}{2}\right)\pi_i+\frac{\eps}{2}.\]
Let $l$ be the largest integer such that $c_l<1$. 
  Then we have, for all $j\geq s$, \begin{eqnarray}
\nonumber  \inner{\frac{1}{k}\one M^{j}}{L} & \geq& \sum_{i=1}^l  \left[\left(1-\frac{\eps}{2}\right)\pi_i+\frac{\eps}{2}\right]\ln c_i + \sum_{i=l+1}^k  \left(1-\frac{\eps}{2}\right)\pi_i\ln c_i\\
\nonumber &=&  \sum_i \left(1-\frac{\eps}{2}\right)\pi_i\ln c_i+\frac{\eps}{2}\sum_{i=1}^l \pi_i\ln c_i\\
\label{epsilonthing}& \geq & \sum_i\left(1-\frac{\eps}{2}\right)\pi_i\ln c_i + \frac{\eps}{2}k\ln c_1
  \end{eqnarray} for all $j \geq s$. Here we further restrict $\eps$ to be sufficiently small that the quantity in (\ref{epsilonthing}) is positive. Moreover, if $1\leq j<s$, we may apply Lemma \ref{L:uniform} to obtain a constant $\mu>0$ such that $\inner{\frac{1}{k}\one M^j}{L}\geq \mu$ for all $1\leq j<s$. Since $s$ is a fixed
  constant, this implies that there is some $\nu > 0$ such that for
  all $j \geq 1$, we have $\inner{\frac{1}{k}\one M^j}{L} \geq \nu$.

Let $c$ be a constant to be fixed later.  We notice that for $t$ sufficiently
large  all vertices $v$ such that
$\norm{\sigma(v) - \frac{1}{k}\one M} \leq \frac{c}{\sqrt{t}}$  are contained
in $\Sigma_{\nicefrac{\nu}{2}}$.  Thus by Theorem \ref{T:SnakeOne}
these vertices
are connected to $\mathcal{S}_{\nicefrac{\epsilon}{2}}$ with
probability at least $1-e^{-\bigTheta{d_2^t}}$ where $d_2 =
  e^{\nicefrac{\nu}{2}}$. 

 At this point it suffices to show that a
  constant fraction of the vertices in $G$ are adjacent to
  $\Sigma_{\nicefrac{\nu}{2}}$.  To this end, let $V'$ be the set of vertices $v $
  such that $\abs{\sigma_j(v) - \frac{1}{k}} \leq
\frac{1}{k\sqrt{t}}$ for $1 \leq j < k$ and $\abs{\sigma_k(v) -
  \frac{1}{k}} \leq 
\frac{-\ln(c_1)}{\ln(c_k)\sqrt{t}}$.  By Chernoff bounds and
Observation \ref{O:sqrt}, we have that a constant fraction of the
vertices of $G$ are in $V'$.  Furthermore, for every vertex $v \in
V'$, $\expect{\deg(v)} \geq 1$.   Now by part (\ref{concinGbar}) of
Lemma \ref{agoodneighbor}, for all $v \in V'$,

\[ \sum_{\norm{\sigma(u) - \sigma(v)M} \leq \sqrt{\frac{\ln(2k)}{2t}}}
\prob{u \sim v} \geq (1-e^{-1}) \expect{\deg(v)} \geq 1 - e^{-1}.\]

Thus, any fixed vertex in $v \in V'$ has a neighbor $u$ such that
$\norm{\sigma(u) - \sigma(v)M} \leq\sqrt{\frac{\ln(2k)}{2t}} $ with
probability at least $e^{-2(1-e^{-1})}$.  We further note that any such neighbor is a member of $\Sigma_{\nicefrac{\nu}{2}}$.  Taking $c \geq
\frac{1}{\sqrt{2}} + \max\set{\frac{1}{k},
  \frac{-\ln(c_1)}{\ln(c_k)}}$ and applying Chernoff bounds completes
the proof.
\end{proof}

\section{Connectivity}

Finally, we turn to the connectivity of $G$.  We note that part
(\ref{M:connected}) of the main theorem follows immediately from Theorem
\ref{T:IndepSpec} by observing that the minimum degree in $W^{\otimes
  t}$ is exponential in $t$ and exploiting the spectral properties of
the Kronecker product, as mentioned in the proof of Theorem \ref{T:giant}.  However, in keeping with the theme of this
paper we provide an alternative proof which exploits the Markov chain
structure.

\begin{theorem}\label{T:const_diam}
Let $G$ be a \skg\ generated by a matrix $P \in [0,1]^{k \times k}$
such that $W$ is connected and non-bipartite. If $1 < c_1 \leq \ldots
\leq c_k$, then there is some constant $d > 1$, depending only on $P$
such that $G$ is connected with probability at least $1 - e^{-\bigTheta{d^t}}$.
\end{theorem}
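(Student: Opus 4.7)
The plan is to combine Theorems~\ref{T:Sepsilon} and \ref{T:SnakeOne}. The central observation is that when $c_1 > 1$, \emph{every} vertex $v$ of $G$ lies in $\Sigma_{\ln(c_1)}$: since $\sigma(v)M^s$ is a probability distribution on $[k]$ for every $s \geq 0$, we have
\[\inner{\sigma(v)M^s}{L} = \sum_i \paren{\sigma(v)M^s}_i \ln(c_i) \geq \ln(c_1) > 0.\]
Thus the expected-degree lower bound needed to run the Markov chain argument holds uniformly across all of $V(G)$, and we can hope to attach every vertex to the connected core $\mathcal{S}_\epsilon$ simultaneously.

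First I would verify the hypotheses of Theorem~\ref{T:Sepsilon}. Since each $\ln(c_i) > 0$, we immediately get $\sum_i c_i \ln(c_i) > 0$, so fixing any $\epsilon > 0$ in the allowable range (any positive $\epsilon$ if $c_1 = c_k$, otherwise the more restrictive bound of the theorem), Theorem~\ref{T:Sepsilon} provides a constant $d_1 > 1$ such that $\mathcal{S}_\epsilon$ induces a connected subgraph of $G$ with probability at least $1 - e^{-\bigTheta{d_1^t}}$. Next I would apply Theorem~\ref{T:SnakeOne} with $\nu = \ln(c_1)$ and the same $\epsilon$; this gives a fixed path length $s$ (depending only on $P$ and $\epsilon$) such that each individual vertex $v$ is connected to $\mathcal{S}_\epsilon$ by a path of length at most $s$ with failure probability at most $s\exp\paren{-\exp\paren{\ln(c_1)t - \bigTheta{\sqrt{t}}}}$. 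Taking a union bound over all $k^t$ vertices gives a total attachment-failure probability of at most $sk^t\exp\paren{-\exp\paren{\ln(c_1)t - \bigTheta{\sqrt{t}}}}$.

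The remaining step is to collapse these estimates into the desired $e^{-\bigTheta{d^t}}$ bound. For any $d$ with $1 < d < \min(c_1, d_1)$, for sufficiently large $t$ we have $\exp\paren{\ln(c_1)t - \bigTheta{\sqrt{t}}} = c_1^t e^{-\bigTheta{\sqrt{t}}} \geq 2 d^t$, so the union-bound term is at most $sk^t e^{-2d^t}$; since $k^t = e^{t\ln k}$ is dominated by $d^t$ for large $t$, this simplifies to $e^{-\bigTheta{d^t}}$. The $\mathcal{S}_\epsilon$-connectivity failure $e^{-\bigTheta{d_1^t}}$ is likewise absorbed into $e^{-\bigTheta{d^t}}$ by the choice $d \leq d_1$, yielding connectivity of $G$ with probability at least $1 - e^{-\bigTheta{d^t}}$, as claimed. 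The genuine mathematical work has already been done in the two cited structural theorems; the main obstacle here is bookkeeping --- specifically confirming that the doubly-exponential per-vertex bound of Theorem~\ref{T:SnakeOne} is strong enough to survive a union bound over all $k^t$ vertices while still leaving a single-exponentially small failure probability, which it is precisely because $c_1 > 1$ forces the inner exponent in the attachment bound to grow exponentially in $t$.
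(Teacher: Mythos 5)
Your proposal is correct and follows essentially the same route as the paper's own proof: observe that $c_1>1$ places every vertex in $\Sigma_{\ln(c_1)}$, attach all vertices to $\mathcal{S}_\epsilon$ via Theorem~\ref{T:SnakeOne} with a union bound over the $k^t$ vertices, and use $\sum_i c_i\ln(c_i)>0$ to invoke Theorem~\ref{T:Sepsilon} for the connectivity of the core. Your extra bookkeeping on absorbing both failure probabilities into a single $e^{-\bigTheta{d^t}}$ term is exactly what the paper's statement requires and is carried out correctly.
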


\begin{proof}
We first note that as $c_1 > 1$, $\ln(c_1) > 0$ and thus for any
signature $\sigma$, $\inner{\sigma}{L} \geq \ln(c_1) > 0$.  Thus every
vertex is in $\Sigma_{\ln(c_1)}$ and hence by Theorem \ref{T:SnakeOne}
for every $\epsilon > 0$, every vertex is connected to $\mathcal{S}_{\epsilon}$
by a path of constant length with probability at least $1 -
ne^{-c_1^{(1-\lilOh{1})t}}$.  Thus it suffices to show that there is
some $\epsilon > 0$ such that $\mathcal{S}_{\epsilon}$ is connected.  But as
$c_i > 1$ for all $i$, this implies that $\sum_i c_i\ln(c_i) > 0$ and
thus by Theorem \ref{T:Sepsilon} there is some constant $\hat{d} > 1$,
depending only on $P$, such that $\mathcal{S}_{\epsilon}$ is connected with
probability at least $1-e^{-\bigTheta{\hat{d}^t}}$.
   \end{proof}

The following two theorems address the case that $c_1=1$. We note that we will always have a giant component in this case, unless $c_1=c_2=\dots =c_k=1$. However, the connectivity no longer depends entirely on the degrees in the graph, but is determined based on how the weight is distributed among the vertices. In particular, the backbone graph will determine the behavior.

\begin{theorem}\label{T:discon}
Let $G$ be a \skg\ generated by $P \in [0,1]^{k \times k}$ with column
sums $1= c_1 \leq
\cdots \leq c_k$.  If $W$ is connected and non-bipartite and the
backbone graph $B$ has a vertex of degree zero, then there is a
constant $p \in (0,1)$ such that with probability at least $1 - p^t$
the graph $G$ has at least $\frac{1}{2}t^{(1-\lilOh{1})\ln\ln(t)}$
isolated vertices.
\end{theorem}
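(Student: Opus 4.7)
The plan is to exhibit a family $\mathcal{I}$ of candidate vertices of size at least $t^{(1-\lilOh{1})\ln\ln t}$, each isolated with probability bounded below by a slow-decaying function of $t$, and then apply a second-moment concentration argument. Let $i^* \in [k]$ be a vertex of $W$ with $B$-degree zero (guaranteed by hypothesis) and set $q = \max_j P_{i^* j} < 1$. For an integer $s = s(t) = \bigTheta{\ln\ln t}$ to be tuned below, let $\mathcal{I}$ be the set of vertices $v \in V(G)$ whose word $w(v)$ has exactly $t-s$ coordinates equal to $1$ and $s$ coordinates equal to $i^*$, so $\size{\mathcal{I}} = \binom{t}{s}$, which by Stirling is $t^{(1-\lilOh{1})s}$.

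Two estimates are needed for each $v \in \mathcal{I}$. First, the signature $\sigma_1 = (t-s)/t$, $\sigma_{i^*} = s/t$ together with $c_1 = 1$ gives $\expect{\deg(v)} = c_1^{t-s}c_{i^*}^s = c_{i^*}^s$. Second, for any $u$, factoring $p_{vu} = \prod_r P_{v_r u_r}$ over coordinates of $v$: each of the $s$ coordinates equal to $i^*$ contributes a factor at most $q$, while each of the $t-s$ coordinates equal to $1$ contributes a factor at most $1$, yielding the uniform bound $\max_u p_{vu} \leq q^s$. Using $\prod_u(1-p_{vu}) \geq \exp\paren{-\expect{\deg(v)}/(1-\max_u p_{vu})}$ and taking $s$ large enough that $q^s \leq 1/2$, one obtains $\prob{v \text{ isolated}} \geq \exp\paren{-2c_{i^*}^s}$. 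Tuning $s$ so that $c_{i^*}^s \leq \ln t$ (which forces $s$ of order $\ln\ln t$) makes the isolation probability at least $t^{-2}$, and then $\expect{|\set{v \in \mathcal{I} : v \text{ isolated}}|} \geq \frac{1}{2}t^{(1-\lilOh{1})\ln\ln t}$.

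The max-edge-probability bound deserves a brief case check. Either vertex $1$ itself has $B$-degree zero, in which case $P_{1j} < 1$ for every $j$ and the estimate only improves; or vertex $1$ has $B$-degree at least one, which combined with $c_1 = 1$ forces row $1$ of $P$ to equal the standard basis vector $e_{j^*}$ for some $j^* \neq 1$ (a self-loop $P_{11} = 1$ together with $c_1 = 1$ would make vertex $1$ isolated in $W$, contradicting connectivity). In the latter subcase the $1$-coordinate factors of $p_{vu}$ are $0$ or $1$, so they restrict the set of $u$ with $p_{vu} > 0$ to those with $u_r = j^*$ at every $1$-coordinate of $v$, but the bound $\max_u p_{vu} \leq q^s$ and the expected degree formula both remain valid. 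Thus both subcases are handled uniformly.

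To pass from the first-moment bound to a tail bound, I would apply Chebyshev's inequality to $X = \sum_{v \in \mathcal{I}} \one\!\paren{v \text{ isolated}}$. Isolation events are positively correlated through the shared edge $\set{v,v'}$ and through pairs of edges $\set{v,u},\set{v',u}$ to common potential neighbors, so the main work is bounding $\mathrm{Var}(X)$. The plan is to partition $\mathcal{I} \times \mathcal{I}$ by the Hamming distance between $w(v)$ and $w(v')$, observing that for well-separated pairs the Kronecker factorization of edge probabilities makes isolation events nearly independent, while the contribution from close pairs is a lower-order term governed by the same expected-degree estimate. The main obstacle is sharpening this variance estimate enough to yield the required exponentially small failure probability $p^t$ rather than merely $\lilOh{1}$; this likely requires either an edge-exposure martingale combined with Azuma's inequality or a direct moment-generating-function computation exploiting the product structure of the adjacency matrix, and is the technically delicate part of the proof.
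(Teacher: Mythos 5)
Your construction of the candidate family and the per-vertex estimate track the paper closely: the paper also works with two-letter words (mostly $1$'s plus $\Theta(\ln\ln t)$ copies of a second letter), uses $\expect{\deg(v)}=c_j^{t_j}$ together with $\ln(1-x)\geq -x/(1-x)$ and the bound $\max_u p_{uv}\leq 1/2$ to get $\prob{\deg(v)=0}\geq e^{-2\expect{\deg(v)}}$, and obtains the same first-moment count. The genuine gap is the concentration step, which you explicitly leave open. Chebyshev on $X=\sum_{v\in\mathcal I}\one(v\text{ isolated})$ cannot give failure probability $p^t$: since $\mathrm{Var}(X)\geq c\,\expect{X}$, the best it certifies is roughly $1-\bigOh{1/\expect{X}}=1-t^{-(1-\lilOh{1})\ln\ln t}$, and an edge-exposure martingale is hopeless because the number of potential edges incident to $\mathcal I$ is of order $\size{\mathcal I}\,k^t$ while each edge moves $X$ by at most $2$. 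The idea you are missing is the paper's independence trick, and it is already latent in your own case analysis: when vertex $1$ has positive $B$-degree, $c_1=1$ forces row $1$ of $P$ to be $e_{j^*}$ with $j^*\neq 1$, hence $P_{11}=P_{1i^*}=0$, so \emph{every} potential edge inside your family $\mathcal I$ has probability zero, the isolation events are exactly independent, and a plain Chernoff bound finishes with failure probability $e^{-\bigTheta{t^{(1-\lilOh{1})\ln\ln t}}}$. In the remaining case (vertex $1$ has $B$-degree zero, so $P_{11}<1$), the paper bounds the expected number of edges inside the family by roughly $\paren{c't^2}^{t_j}P_{11}^{t}\leq p^t$ and conditions on there being none, which restores independence; this conditioning is precisely where the $1-p^t$ in the statement comes from. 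Without some such device your proof does not reach the stated probability bound.

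A secondary, fixable oversight: if vertex $1$ is itself the only $B$-degree-zero vertex, your family forces $i^*=1$ and $\mathcal I$ collapses to the single all-ones word. In that situation you must take an arbitrary second letter $j\neq 1$ as the minority symbol (the majority letter $1$ already guarantees $\max_u p_{uv}\leq q^{t-s}$ with $q=\max_j P_{1j}<1$), which is exactly the paper's second case; as written, your construction does not cover it.
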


\begin{proof}
Note that as the backbone graph $B$ has a vertex of degree 0, there exists a vertex $v\in G$ such that for all vertices $u$, $\prob{u \sim v} \leq
\frac{1}{2}$. We note that in this case we have 
\begin{align*}
\ln\paren{\prob{\deg(v) = 0}} &= \ln\paren{\prod_{u} 1 - \prob{u \sim
    v}} \\
&= \sum_u \ln\paren{1 - \prob{u\sim v}} \\
&\geq - \sum_u \frac{\prob{u \sim v}}{1-\prob{u \sim v}} \\
&\geq - \sum_u 2\prob{u \sim v} \\
&= -2\expect{\deg{v}},
\end{align*}
where the last inequality comes from the upper bound on $\prob{u \sim
  v}$.  Thus we have that $\prob{\deg(v) = 0} \geq
e^{-2\expect{\deg(v)}}.$  Thus it suffices to find a large collection
of vertices in $G$ whose degrees are independent and where
$\expect{\deg(v)}$ is small.  

To that end suppose that there is some $i$ such that $p_{1i} = 1$,
that is, the degree of vertex $1$ in $B$ is not zero.  Thus there
is some $j \neq 1, i$ such that $j$ has degree zero in $B$.  Now let
$S^{(j)}_{t_j}$ be the set of vertices in $G$ whose signature $\sigma$ has $\sigma_j=\frac{t_j}{t}$, $\sigma_1=1-\frac{t_j}{t}$, and $\sigma_i=0$ for $i\neq 1, j$. Since $c_1 = 1$ and $p_{1i}=1$,
we know that $p_{1j} = 0$ and thus the degrees of all vertices in $S^{(j)}_{t_j}$ are
independent.  We note that there is a choice of constant $c$ such that
if $t_j = c\ln\ln(t)$ then the expected number of isolated vertices in
$S^{(j)}_{t_j}$ is $t^{(1-\lilOh{1})\ln\ln(t)}$, and thus by Chernoff bounds
with probability at least $1 -
e^{-\frac{t^{(1-\lilOh{1})\ln\ln(t)}}{6}}$ there are at least
$\frac{1}{2}t^{(1-\lilOh{1})\ln\ln(t)}$ isolated vertices in $G$.  

Now suppose that the degree of $1$ in $B$ is zero. Choose some index
$j \neq 1$ arbitrarily and consider the set $S^{(j)}_{t_j}$ as above.
As $j$ is arbitrary there may be some edges between vertices of
$S_{t_j}^{(j)}$.  Thus we note that when $3t_j \leq t$, we have 
\begin{align*}
\expect{e(S^{(j)}_{t_j},S^{(j)}_{t_j})} &= \sum_{u \in S^{(j)}_{t_j}}
\sum_{v \in S^{(j)}_{t_j}} \prob{u \sim v} \\
&= 2\binom{t}{t_j} \sum_{i=0}^{t_j} \binom{t_j}{i} \binom{t-t_j}{t_j-i}p_{jj}^{t_j - i} p_{j1}^{i}
p_{1j}^i p_{11}^{t- t_j - i} \\
&\leq 2\binom{t}{t_j}\binom{t-t_j}{t_j} p_{11}^{t- 2t_j}\paren{p_{11}p_{jj} + p_{1j}^2}^{t_j}\\
&\leq 2t^{t_j}t^{t_j}p_{11}^{t-2t_j}(p_{11}p_{jj}+p_{1j}^2)^{t_j}
\end{align*}

In particular, there is a constant $c'$ such that
$\expect{e(S^{(j)}_{t_j},S^{(j)}_{t_j})}
\leq \paren{c't^2}^{t_j}p_{11}^t$.  As $p_{11} < 1$, this implies that
the probability of an edge in $S_{t_j}^{(j)}$ is exponentially small
provided $t_j \in \lilOh{\frac{t}{\ln(t)}}$.  Thus, again choosing $t_j
= c \ln\ln(t)$ and conditioning on $e(S^{(j)}_{t_j},S^{(j)}_{t_j}) =
0$ gives the desired result. 
\end{proof}

A slight simplification of this result gives part (\ref{M:discon_c1})
of Theorem \ref{T:Master}.

\begin{theorem}\label{T:Bconn}
Let $G$ be a \skg\ generated by a matrix $P \in [0,1]^{k \times k}$
such that $W$ is connected and non-bipartite. If $1 = c_1 \leq \ldots
\leq c_k$ and the backbone graph $B$ has no vertices of degree zero,
then there is a constant $d> 1$ such that $G$ is connected with
probability at least $1-e^{-\bigTheta{d^t}}$.
\end{theorem}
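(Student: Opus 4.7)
Since Theorem \ref{T:const_diam} handles $c_1 > 1$ by embedding every vertex in $\Sigma_\nu$ with $\nu = \ln(c_1) > 0$, the main difficulty here is that when $c_1 = 1$, the vertices with signature supported on $I := \{i : c_i = 1\}$ have $\inner{\sigma(v)}{L} = 0$ and thus lie outside every $\Sigma_\nu$ with $\nu > 0$. The plan is to establish a connected core via Theorem \ref{T:Sepsilon} and then use a deterministic backbone step to move the ``bad'' vertices into a $\Sigma_\nu$ where Theorem \ref{T:SnakeOne} applies. First, observe that $I \subsetneq [k]$: otherwise the column-sum-one condition combined with the backbone hypothesis would force $P$ to be a symmetric $0/1$ matrix with a unique $1$ per column, making $W$ a disjoint union of edges and self-loops, hence either disconnected or bipartite. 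Hence some $c_i > 1$, and Lemma \ref{L:c_prod} gives $\sum c_i \ln c_i > 0$, so Theorem \ref{T:Sepsilon} applied with any $\epsilon$ in the admissible range (e.g., $\epsilon = \tfrac{1}{2}$, since $c_1 = 1$ makes the bound on $\epsilon$ reduce to $\epsilon < 1$) gives that $\mathcal{S}_\epsilon$ is connected with probability at least $1 - e^{-\bigTheta{d_1^t}}$ for some $d_1 > 1$.

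Next, I would establish a uniform lower bound $\inner{e_i M}{L} > 0$ for every $i \in [k]$, which via the convex-combination identity $\inner{\rho M}{L} = \sum_i \rho_i \inner{e_i M}{L}$ and induction implies $\inner{\rho M^r}{L} \geq c_0$ for every probability distribution $\rho$ and every $r \geq 1$, where $c_0 = \min_i \inner{e_i M}{L}$. For $i \in I$, the column-sum constraint forces $e_i M = e_{f(i)}$, where $f(i)$ is the unique backbone neighbor of $i$; connectedness and non-bipartiteness preclude $f(i) \in I$ (otherwise $\{i, f(i)\}$ would be an isolated edge component), so $\inner{e_i M}{L} = \ln c_{f(i)} > 0$. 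For $i \notin I$, if $p_{ii} = 0$ and every non-self neighbor of $i$ in $W$ lay in $I$, the component of $i$ would be a bipartite star, contradicting the hypotheses; hence $i$ has some neighbor $j \notin I$ (possibly $j = i$ via a self-loop), giving $\inner{e_i M}{L} \geq (P_{ji}/c_i)\ln c_j > 0$. Fix any backbone map $f \colon [k] \to [k]$ (uniquely determined on $I$) and set $\nu = \min(c_0, \tfrac{1}{2}\min_{i \notin I} \ln c_i) > 0$.

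Now partition the vertex set according to $\sigma_I(v) := \sum_{i \in I} \sigma_i(v)$. If $\sigma_I(v) \leq \tfrac{1}{2}$, then $\inner{\sigma(v)}{L} \geq \tfrac{1}{2}\min_{i \notin I} \ln c_i \geq \nu$, and combined with the $r \geq 1$ bound this gives $v \in \Sigma_\nu$. If $\sigma_I(v) > \tfrac{1}{2}$, then the coordinate-wise image $f(v) := (f(v_1), \ldots, f(v_t))$ is a deterministic neighbor of $v$ in $G$ (since $P_{v_i, f(v_i)} = 1$ for every $i$, and some $v_i \in I$ guarantees $f(v) \neq v$), and a direct calculation shows
\[\sigma_{[k] \setminus I}(f(v)) = \sum_{j : f(j) \notin I} \sigma_j(v) \geq \sigma_I(v) > \tfrac{1}{2},\]
so $f(v) \in \Sigma_\nu$ by the same reasoning as in the first case. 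Applying Theorem \ref{T:SnakeOne} to $v$ or to $f(v)$ (prepending the deterministic edge $v \sim f(v)$ in the second case) yields a path of length at most $s+1$ from $v$ to $\mathcal{S}_\epsilon$ with failure probability at most $s e^{-e^{\nu t - \bigTheta{\sqrt{t}}}}$. A union bound over the $k^t$ vertices, together with the connectivity of $\mathcal{S}_\epsilon$, shows that $G$ is connected with probability at least $1 - e^{-\bigTheta{d^t}}$ for some $d > 1$.

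The main obstacle is the existence of vertices with signature supported on $I$, which are not in any $\Sigma_\nu$ and hence cannot be handled directly by Theorem \ref{T:SnakeOne}. The backbone hypothesis --- that every vertex of $B$ has positive degree --- is precisely what enables the single deterministic step $v \mapsto f(v)$ that transfers the signature mass out of $I$, after which the Markov chain convergence argument of Theorem \ref{T:SnakeOne} can be applied.
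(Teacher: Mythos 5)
Your proposal is correct, and while it shares the paper's overall skeleton (a connected core $\mathcal{S}_\epsilon$ from Theorem \ref{T:Sepsilon}, then short paths from every vertex into it), the second half is a genuinely different route. The paper does not invoke Theorem \ref{T:SnakeOne} as a black box: it re-runs that proof with a per-step case analysis, noting that coordinates with $c_i=1$ evolve deterministically along the backbone and exactly follow $M$, and then at each step either the heavy coordinates are so scarce that a deterministic $B^{\otimes t}$-neighbor stays within $\frac{\epsilon}{sk\norm[\infty]{L}}$ of $\sigma M$, or there are enough heavy coordinates that the expected degree is exponentially large and Lemma \ref{agoodneighbor} supplies a random neighbor close to $\sigma M$. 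You instead prove a clean structural fact --- $c_0=\min_i \inner{e_iM}{L}>0$, because backbone partners of weight-one columns must have column sum exceeding one (else an isolated edge/loop component) and every heavy state has a heavy neighbor or self-loop (else a bipartite star component) --- and deduce via stochasticity that $\inner{\rho M^r}{L}\geq c_0$ for all distributions $\rho$ and $r\geq 1$, so every vertex with at most half its mass on $I=\{i: c_i=1\}$ lies in $\Sigma_\nu$, and every other vertex has a deterministic backbone image $f(v)\in\Sigma_\nu$; Theorem \ref{T:SnakeOne} then applies verbatim, with a union bound over the $k^t$ vertices absorbed by the doubly exponential failure probability. What your approach buys is modularity and transparency: the backbone hypothesis is used exactly once (for the single deterministic step and the positivity of $c_0$), the possibility of small expected degree recurring mid-path is ruled out structurally rather than handled step by step, and the reason the backbone condition is the right threshold becomes evident. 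The paper's in-line argument avoids needing your two structural observations about $I$, at the cost of reproving a variant of Theorem \ref{T:SnakeOne}. The only blemishes in your write-up are ones the paper shares: the degenerate case $k=1$ (where $I=[k]$ is consistent with a connected non-bipartite $W$ consisting of a single unit self-loop, but the theorem is then trivial) is glossed over, and the final union bound is asserted rather than computed; neither affects correctness.
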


\begin{proof}
First we note that $c_k > 1$ as otherwise the only edges present in $W$ are those present in the backbone graph, and in particular, $W$ is a perfect
matching contradicting the non-bipartiteness.  Thus we have that
$\sum_i c_i\ln(c_i) > 0$ and thus by Theorem \ref{T:Sepsilon} there is some
$\epsilon > 0$ and $d' > 1$ such that $S_{2\epsilon}$ is connected with probability at least $1 -
e^{-\bigTheta{d'^t}}$. 

 Now in a similar manner as the proof of Theorem \ref{T:SnakeOne} it
 suffices to show that asymptotically almost surely, from every vertex
 $v = v_0$ there is a sequence $v_0, v_1, \ldots, v_s$ such that
 $v_{i} \sim v_{i+1}$ and $v_s \in \mathcal{S}_{\epsilon} \subset
 S_{2\epsilon}$. By imposing the additional condition that  $\norm[\infty]{\sigma(v_i)M - \sigma(v_{i+1})} \leq \frac{\epsilon}{s
   k \norm[\infty]{L}}$, we may take $s= \ceil{\frac{1}{\lambda}\ln\paren{\frac{\vol{W}}{2\epsilon}}}$ by Theorem  \ref{TVDist} and the Markov chain viewpoint.

To that end fix an arbitrary vertex $v$ and consider the behavior of
$Z^{(v)}$ from the point of view of the product distribution 
$\paren{X^{(1)}}^{t_1} \times \cdots \times \paren{X^{(k)}}^{t_k}$
where $t_i$ is the number of $i$'s in the label for $v$.  
Notice that for
those indicies $i$ where $c_i = 1$, $X^{(i)}$ is the identity
distribution. Furthermore, these coordinates perfectly respect the
action of the Markov chain given by $M$. 

Let $j$ be the first index with $c_j>1$, so that $c_{j-1}\leq 1$. Suppose that $t_j +
\cdots + t_k \leq \frac{\epsilon}{s
   k \norm[\infty]{L}} t$. Note then any neighbor $u$ of $v$ in
$B^{\otimes{t}}$ immediately satisfies that $\norm[\infty]{\sigma(v)M - \sigma(u)}
\leq \frac{\epsilon}{s
   k \norm[\infty]{L}}$.

   Otherwise, we have $t_j+\cdots+t_k>\frac{\eps}{sk\norm[\infty]{L}}$. But then $\expect{\deg(v)} \geq c_j^{\frac{\epsilon}{s
   k \norm[\infty]{L}}t}$ and $c_j>1$, and thus by Lemma \ref{agoodneighbor}, there is a constant $c$ such
that 
\[ \sum_{\norm[\infty]{\sigma(v)M - \sigma(u)}\leq \frac{\eps}{sk\|L\|_\infty}} \prob{u \sim v} \geq c
c_j^{\frac{\epsilon}{s
   k \norm[\infty]{L}}t}.\]   
Applying Chernoff bounds to assure the existence of such a vertex
completes the proof.
\end{proof}

\section{Concluding Remarks}

We note that in principle these techniques can be extended to analyze
the emergence of connectivity and the giant component in
generalizations of the stochastic Kronecker graph, such as the
multiplicative attribute graph~\cite{Leskovec:MAG}.  In fact, based on
the work in \cite{Radcliffe:MAGspectra}, it is likely that similar
transition points will hold.  That is, the multiplicative attribute
graph will have a giant component when the median expected degree is 1
and become connected when the probability of an isolated vertex goes
to zero.  

Perhaps a more interesting direction would to resolve the size of the
largest component in the case when $c_1 = c_2 = \cdots = c_k = 1$.
By letting $P = \frac{1}{k} \one \one^T$ we see that this regime
includes the Erd\H{o}s-R\'{e}nyi graph
$\mathcal{G}\!\paren{k^t,\frac{1}{k^t}}$ at criticality.  Thus it
seems likely that in order to understand the size of the largest
component of the stochastic Kronecker graph when $c_1 = c_2 = \cdots =
c_k = 1$ it will require a deeper understanding of why the branching
process for $\mathcal{G}(n,\frac{1}{n})$ terminates with a largest
component of size
$\bigTheta{n^{\nicefrac{2}{3}}}$~\cite{Bollobas:evolutionER}.  

As a possible intermediate stage, consider a $d$-regular, connected,
non-bipartite graph $H$ on $k$ vertices and let $P$ be $\frac{1}{d}$
times the adjacency matrix of $H$.  What is the size of the largest
component in the \skg\ generated by $P$?  From a natural coupling with
$\mathcal{G}\paren{d^t,\frac{1}{d^t}}$ it is clear that it should be
at least $\bigOmega{d^{\nicefrac{2t}{3}}}$. On the other hand, since
the degree of every vertex is still asymptotically Poisson with
parameter 1, the branching process point of view would indicate that
the size of the largest component should be
$\bigTheta{k^{\nicefrac{2t}{3}}}$.  However, we note that if $H$ is
the $d$-regular graph formed by two copies of $K_{d-1}$ joined by a
perfect matching, then $H^{\otimes t}$ consists of $2^t$ copies of
$K_{(d-1)^t}$ with relatively few edges between them.  Furthermore, as
the expected degree within each of these copies of $K_{(d-1)^t}$ is
$\paren{\frac{d-1}{d}}^t \in \lilOh{1}$, the largest component in each
of these components is $\bigOh{t}$, seemingly indicating that the
overall size of the largest component is relatively small.  Thus, it
seems likely that any resolution of the case where $c_1 = c_2 = \cdots
= c_k$ will necessitate a deeper understanding of the branching
process at criticalility, and specifically, how the branching process
interacts with the underlying network of potential edges.


\begin{thebibliography}{10}

\bibitem{Bollobas:DiameterER}
{\sc B{\'e}la Bollob{\'a}s}, {\em The diameter of random graphs}, Trans. Amer.
  Math. Soc., 267 (1981), pp.~41--52.

\bibitem{Bollobas:evolutionER}
\leavevmode\vrule height 2pt depth -1.6pt width 23pt, {\em The evolution of
  random graphs}, Trans. Amer. Math. Soc., 286 (1984), pp.~257--274.

\bibitem{Bollobas:NormalGC}
{\sc B{\'e}la Bollob{\'a}s and Oliver Riordan}, {\em Asymptotic normality of
  the size of the giant component via a random walk}, J. Combin. Theory Ser. B,
  102 (2012), pp.~53--61.

\bibitem{Bollobas:BranchingGC}
\leavevmode\vrule height 2pt depth -1.6pt width 23pt, {\em A simple branching
  process approach to the phase transition in {$G_{n,p}$}}, Electron. J.
  Combin., 19 (2012), pp.~Paper 21, 8.

\bibitem{Horn:GiantCompSubgraph}
{\sc Fan Chung, Paul Horn, and Linyuan Lu}, {\em The giant component in a
  random subgraph of a given graph}, in Algorithms and models for the
  web-graph, vol.~5427 of Lecture Notes in Comput. Sci., Springer, Berlin,
  2009, pp.~38--49.

\bibitem{Horn:PercolationGC}
\leavevmode\vrule height 2pt depth -1.6pt width 23pt, {\em Percolation in
  general graphs}, Internet Math., 6 (2009), pp.~331--347 (2010).

\bibitem{Chung:ComponentsExpectDeg}
{\sc Fan Chung and Linyuan Lu}, {\em Connected components in random graphs with
  given expected degree sequences}, Ann. Comb., 6 (2002), pp.~125--145.

\bibitem{Chung:Spectral}
{\sc Fan R.~K. Chung}, {\em Spectral graph theory}, vol.~92 of CBMS Regional
  Conference Series in Mathematics, Published for the Conference Board of the
  Mathematical Sciences, Washington, DC, 1997.

\bibitem{Lubetzky:YoungGC}
{\sc Jian Ding, Jeong~Han Kim, Eyal Lubetzky, and Yuval Peres}, {\em Anatomy of
  a young giant component in the random graph}, Random Structures Algorithms,
  39 (2011), pp.~139--178.

\bibitem{Erdos:RandomGraphs}
{\sc P.~Erd{\H{o}}s and A.~R{\'e}nyi}, {\em On the evolution of random graphs},
  Magyar Tud. Akad. Mat. Kutat\'o Int. K\"ozl., 5 (1960), pp.~17--61.

\bibitem{Martin:GiantCompSubgraph}
{\sc Alan Frieze, Michael Krivelevich, and Ryan Martin}, {\em The emergence of
  a giant component in random subgraphs of pseudo-random graphs}, Random
  Structures Algorithms, 24 (2004), pp.~42--50.

\bibitem{Spencer:AchlioptasGC}
{\sc Svante Janson and Joel Spencer}, {\em Phase transitions for modified
  {E}rd{\H o}s--{R}\'enyi processes}, Ark. Mat., 50 (2012), pp.~305--329.

\bibitem{Leskovec:MAG}
{\sc Myunghwan Kim and Jure Leskovec}, {\em Multiplicative attribute graph
  model of real-world networks}, in $7^{\textrm{th}}$ Workshop on Algorithms
  and Models for the Web Graph, 2010.
\newblock preprint, arXiv:1009.3499v3.

\bibitem{Leskovec:KroneckerGeneration}
{\sc Jure Leskovec, Deepayan Chakrabarti, Jon Kleinberg, and Christos
  Faloutsos}, {\em Realistic, mathematically tractable graph generation and
  evolution, using kronecker multiplication}, in European Conference on
  Principles and Practice of Knowledge Discovery in Database, 2005.

\bibitem{Leskovec:Kronecker}
{\sc Jure Leskovec and Christos Faloutsos}, {\em Scalable modeling of real
  graphs using kronecker multiplication}, in ICML '07: Proceedings of the 24th
  international conference on Machine learning, New York, NY, USA, 2007, ACM,
  pp.~497--504.

\bibitem{Luczak:CriticalComponents}
{\sc Tomasz {\L}uczak}, {\em Component behavior near the critical point of the
  random graph process}, Random Structures Algorithms, 1 (1990), pp.~287--310.

\bibitem{Mahdian:Kronecker}
{\sc Mohammad Mahdian and Ying Xu}, {\em Stochastic {K}ronecker graphs}, in
  Algorithms and models for the web-graph, vol.~4863 of Lecture Notes in
  Comput. Sci., Springer, Berlin, 2007, pp.~179--186.

\bibitem{Radcliffe:Spectra}
{\sc Mary Radcliffe and Fan Chung}, {\em On the spectra of general random
  graphs}. Electronic Journal of Combinatorics, 18 (2011), P215.

  
\bibitem{Radcliffe:KroneckerGiant}
{\sc Mary Radcliffe and Paul Horn}, {\em Giant components in kronecker graphs},
  Random Structures \& Algorithms, 40 (2012), pp.~385-397.

\bibitem{Radcliffe:MAGspectra}
{\sc Mary Radcliffe and Stephen~J. Young}, {\em The spectra of multiplicative
  attribute graphs}.  Linear Algebra and its Applications., 462 (2014), pp.~39-58.

\bibitem{Sinclair:MCMC}
{\sc Alistair Sinclair}, {\em Algorithms for random generation and counting},
  Progress in Theoretical Computer Science, Birkh\"auser Boston Inc., Boston,
  MA, 1993.
\newblock A Markov chain approach.

\bibitem{Jerrum:gap}
{\sc Alistair Sinclair and Mark Jerrum}, {\em Approximate counting, uniform
  generation and rapidly mixing {M}arkov chains}, Inform. and Comput., 82
  (1989), pp.~93--133.

\bibitem{Spencer:GCOverview}
{\sc Joel Spencer}, {\em The giant component: the golden anniversary}, Notices
  Amer. Math. Soc., 57 (2010), pp.~720--724.

\end{thebibliography}
\end{document}